\documentclass{amsart}
\usepackage{ amsmath, amsthm, amsfonts, hyperref, graphicx, ifpdf}
\usepackage[dvipsnames,usenames]{color}
\hypersetup{
   bookmarks=true,         
   unicode=false,          
   pdftoolbar=true,        
   pdfmenubar=true,        
   pdffitwindow=false,     
   pdfstartview={FitH},    
   pdftitle={My title},    
   pdfauthor={Author},     
   pdfsubject={Subject},   
   pdfcreator={Creator},   
   pdfproducer={Producer}, 
   pdfkeywords={keywords}, 
   pdfnewwindow=true,      
   colorlinks=true,       
   linkcolor=blue,          
   citecolor=blue,        
   filecolor=magenta,      
   urlcolor=MidnightBlue          
}

\newcommand{\ve}{{\bf e}}

\newcommand{\vv}{{\bf v}}

\newcommand{\vz}{{\bf z}}

\newcommand{\VP}{{\varphi}}
\newcommand{\VE}{{\varepsilon}}
\newcommand{\vnu}{\boldsymbol{\nu}}

\newcommand{\Ric}{\operatorname{Ric}}

\newcommand{\Div}{\operatorname{div}}

\newtheorem{theorem}{Theorem}[section]
\newtheorem{lemma}[theorem]{Lemma}
\newtheorem{proposition}[theorem]{Proposition}

\theoremstyle{definition}

\theoremstyle{remark}
\newtheorem{remark}[theorem]{Remark}

\numberwithin{equation}{section}



\begin{document}
\setlength{\baselineskip}{1.2\baselineskip}

\title[MMCF of entire locally Lipschitz radial graphs in hyperbolic space]
{Modified mean curvature flow of entire locally Lipschitz radial graphs in hyperbolic space}

\author{Patrick Allmann}
\address{Mathematics Department\\University of California, Santa Cruz, 1156 High Street\\ Santa Cruz, CA 95064\\USA}
\email{pallmann@ucsc.edu}

\author{Longzhi Lin}
\address{Mathematics Department\\University of California, Santa Cruz, 1156 High Street\\ Santa Cruz, CA 95064\\USA}
\email{lzlin@ucsc.edu}

\author{Jingyong Zhu}
\address{Max Planck Institute for Mathematics in the Sciences, \\ Inselstrasse 22, 04103 Leipzig, Germany}
\email{jizhu@mis.mpg.de}


\subjclass[2010]{Primary 53C44; Secondary 35K20, 58J35.}
\keywords{interior gradient eatimates; modified mean curvature flow; hyperbolic space; constant mean curvature}

\begin{abstract}
The \textit{Asymptotic Plateau Problem} asks for the existence of smooth
complete hypersurfaces of constant mean curvature with prescribed asymptotic boundary at infinity in the hyperbolic space $\mathbb{H}^{n+1}$. The modified
mean curvature flow (MMCF)
$$\frac{\partial \mathbf{F}}{\partial t} = (H-\sigma)\vnu, \quad \quad \sigma\in (-n,n)$$ 
was firstly introduced by Xiao and the second author a few years back in \cite{LX12}, and it provides a tool using geometric flow to find such hypersurfaces with constant mean curvature in $\mathbb{H}^{n+1}$. Similar to the usual mean curvature flow, the MMCF is the natural negative $L^2$-gradient flow of the area-volume functional $\mathcal{I}(\Sigma)=A(\Sigma)+\sigma V(\Sigma)$ associated
to a hypersurface $\Sigma$. In this paper, we prove that the MMCF starting from
an entire $locally$ $Lipschitz$ continuous radial graph exists and stays radially
graphic for all time. In general one cannot expect the convergence of the flow as it can be seen from the flow starting from a horosphere (whose asymptotic boundary is degenerate to a point). 
\end{abstract}

\maketitle

\section{Introduction} 

Mean curvature flow (MCF) was first studied by Brakke \cite{B78} in the context
of geometric measure theory. Later, smooth compact surfaces evolved by MCF
in Euclidean space were investigated by Huisken in \cite{H84} and \cite{H90}, and in
arbitrary ambient manifolds in \cite{H86}. The evolution of entire graphs by MCF in $\mathbb{R}^{n+1}$ was also studied in \cite{EH89}, the result being improved in \cite{EH91}. Lately, the MCF in Euclidean space has attracted much attention. See, e.g., the survey of various aspects of the MCF of hypersurfaces by Colding, Minicozzi and Pedersen \cite{CMP} and the references therein. In \cite{U03}, Unterberger considered the MCF in hyperbolic space $\mathbb{H}^{n+1}$ and proved that if the initial surface $\Sigma_0$ has bounded hyperbolic height over ${\mathbb{S}}^n_+$,  (i.e., $\partial \Sigma_0 = \partial \mathbb{S}^n_+$), then under the MCF, $\Sigma_t$ converges in $C^{\infty}$ to ${\mathbb{S}}^n_+$, which is minimal.

The Asymptotic Plateau Problem of finding smooth complete hypersurfaces of constant mean curvature in hyperbolic space $\mathbb{H}^{n+1}$ with prescribed asymptotic boundary at infinity has also been studied over the years, see \cite{A82}, \cite{HL87}, \cite{Lin89}, \cite{T96} and \cite{NS96}. In \cite{GS00} Guan and Spruck proved the existence and uniqueness of smooth complete hypersurfaces of constant mean curvature $\sigma\in (-n,n)$ in hyperbolic space with prescribed $C^{1,1}$ star-shaped asymptotic boundary at infinity. In \cite{DS09}, among others, De Silva and Spruck recovered this result using the method of calculus of variations. In the previous joint work \cite{LX12} of Xiao and the second author, the following modified mean curvature flow (MMCF) was first introduced, which is the natural negative $L^2$-gradient flow of the area-volume functional $\mathcal{I}(\Sigma)\,=\, \mathcal{I}_{\Omega} (v)\, =\,A_{\Omega}(v) + \sigma V_{\Omega}(v)$ associated to $\Sigma$ as in \cite{DS09}. It can be used to continuously deform hypersurfaces in $\mathbb{H}^{n+1}$ into constant mean curvature hypersurfaces with prescribed asymptotic boundary at infinity.

Let $\mathbf{F}(\mathbf{z},t): {\mathbb{S}}^n_{+} \times [0,\infty) \rightarrow \mathbb{H}^{n+1}$ be the complete embedded star-shaped hypersurfaces (as complete radial graphs over ${\mathbb{S}}^n_{+}$) moving by the MMCF in hyperbolic space $\mathbb{H}^{n+1}$, where ${\mathbb{S}}^n_+$ is the upper hemisphere of the unit sphere ${\mathbb{S}}^n$ in $\mathbb{R}^{n+1}$ and the half-space model of $\mathbb{H}^{n+1}$ is used. That is, $\mathbf{F}(\cdot,t)$ is a one-parameter family of smooth immersions with images $\Sigma_{t}=\mathbf{F}({\mathbb{S}}^n_+, t),$ satisfying the evolution equation
\begin{equation}\label{MMCF0}
\left\{
\begin{aligned}
\frac{\partial}{\partial t} \mathbf{F}(\mathbf{z},t) &= (H-\sigma)\vnu_H\,,\quad (\mathbf{z},t) \in {\mathbb{S}}^n_+\times [0,\infty)\,,\\
\mathbf{F}(\mathbf{z},0) &= \Sigma_0\,,\quad \mathbf{z} \in {\mathbb{S}}^n_+ \,,\\
\mathbf{F}(\mathbf{z},t) &= \Gamma \,,\quad \mathbf{z} \in \partial{\mathbb{S}}^n_+ \,,
\end{aligned}
\right.
\end{equation}
where $H = \sum_{i=1}^n \kappa^H_i$ denotes the hyperbolic mean curvature of $\Sigma_t$, $\sigma \in (-n,n)$ is a constant, and
$\vnu_H$ denotes the outward unit normal of $\Sigma_t$ with respect to the hyperbolic metric. More precisely, suppose the solution $\mathbf{F}(\mathbf{z},t)$ to the MMCF \eqref{MMCF0} can be represented as a complete radial graph over ${\mathbb{S}}^n_+$. That is,
\begin{equation}\label{radialheight}
\mathbf{F}(\mathbf{z},t)\,=\, x(\mathbf{z},t)\,=\,e^{v(\mathbf{z},t)}\mathbf{z}\,, \quad (\mathbf{z},t) \in  {\mathbb{S}}^n_+ \times (0,\infty)\,,
\end{equation}
and $\Gamma \subset \partial_\infty \mathbb{H}^{n+1} = \{x_{n+1} = 0\}$ is the radial graph of a function $e^{\phi}$ over $\partial {\mathbb{S}}^n_+$, i.e., $\Gamma$ can be represented by
$$ \Gamma(\mathbf{z})\,=\, e^{\phi(\mathbf{z})}\mathbf{z}\,,\quad \mathbf{z}\in \partial {\mathbb{S}}^n_+\,.$$
We call such a function $v(\mathbf{z},t)$ the radial height of $\Sigma_t = \mathbf{F}(\cdot, t)$. Note that $\Sigma_t$ remains a radial graph as long as the support function $\langle\vnu_E,x\rangle_E$ satisfies
\begin{equation}\label{RadialG}
\langle\vnu_E,x\rangle_E \,>\,0\,,
\end{equation}
where $\vnu_E$ is the Euclidean outward unit normal vector of $\Sigma_t$\,.
Then one observes that the Cauchy initial-boundary value problem for the MMCF \eqref{MMCF0} is equivalent to the following degenerate parabolic PDE with initial and boundary conditions:
\begin{equation}\label{MMCF1}
\left\{
\begin{aligned}\frac{\partial v(\mathbf{z},t)}{\partial t} &=\, y^2\frac{\alpha^{ij}v_{ij}}{n}-y\mathbf{e}\cdot\nabla
v-\sigma yw \,, \quad (\mathbf{z},t)\in {\mathbb{S}}^n_+ \times (0,\infty)\,,\\
v(\mathbf{z},0) &= v_0(\mathbf{z})\,,\quad \mathbf{z} \in {\mathbb{S}}^n_+\,,\\
v(\mathbf{z},t) &= \phi(\mathbf{z})\,,\quad (\mathbf{z},t)\in \partial {\mathbb{S}}^n_+ \times [0,\infty)\,,
\end{aligned}
\right.
\end{equation}
where we represent $\Sigma_0$ as the radial graph of the function $e^{v_0}$ over $\mathbb{S}^n_+$ and $v_0\big|_{\partial {\mathbb{S}}^n_+} = \phi$\,. Here $y= \langle{\mathbf{e}}, {\bf{z}}\rangle_E$, and $\mathbf{e}$ is the unit vector in the positive $x_{n+1}$ direction in $\mathbb{R}^{n+1}$. Also, $\alpha^{ij}=\gamma^{ij}-\frac{\gamma^{ik}v_kv_j}{w^2} \,,1\leq i,j\leq n$, $w=(1+|\nabla v|^2)^{1/2}$ and we denote by $\gamma_{ij}$ the standard metric of ${\mathbb{S}^n_+}$ and $\gamma^{ij}$ its inverse. Note that the MCF, i.e., the case of $\sigma =0$ for \eqref{MMCF0} was considered in \cite{U03}, but the case of $\sigma \neq 0$ is substantially different, see Remark \ref{obstruction}.
\vskip 2mm

In \cite{LX12}, the Cauchy initial-boundary value problem \eqref{MMCF1} for the MMCF of complete radial graphs was studied. The flow starting from an entire star-shaped Lipschitz continuous radial graph with the \textit{uniform local ball condition} on the asymptotic boundary was shown to exist for all time and converge to a complete hypersurface of constant mean curvature with prescribed asymptotic boundary at infinity. Let us elaborate a bit on the \textit{uniform local ball condition}. Due to the degeneracy at infinity of the MMCF \eqref{MMCF1} for radial graphs, we will use the method of continuity and consider the approximate problem.  For fixed $\epsilon>0$ sufficiently small, let $\Gamma_{\epsilon}$ be the vertical translation of $\Gamma \subset \{x_{n+1} = 0\}$ to the plane $\{x_{n+1} = \epsilon\}$ and let $\Omega_{\epsilon}$ be the subdomain of ${\mathbb{S}}^n_+$ such that $\Gamma_{\epsilon}$ is the radial graph over $\partial \Omega_{\epsilon}$ (see Figure \ref{PIC1}). \input{P1.TpX} For any $\epsilon\geq 0$ sufficiently small and any point $P \in \partial \Sigma_0^{\epsilon} = \Gamma_{\epsilon}$ (denoting $\Sigma_0^0 = \Sigma_0$ and $\Gamma_0 = \Gamma$), the uniform star-shapedness of $\Gamma_{\epsilon}$ implies that there exist balls $B_{R_1}(a, P)$ and $B_{R_2}(b,P)$ with radii $R_1>0$ and $R_2>0$ and centered at $a = (a', -\sigma R_1)$ and $b = (b', \sigma R_2)$, respectively, such that $\{x_{n+1} = \epsilon\} \cap B_{R_1}(a, P)$ is internally tangent to $\Gamma_{\epsilon}$ at $P$ and $\{x_{n+1} = \epsilon\} \cap B_{R_2}(b,P)$ is externally tangent to $\Gamma_{\epsilon}$ at $P$. $\partial B_{R_1}(a,P)$ and $\partial B_{R_2}(b,P)$ are the so-called \textit{equidistance spheres}. Note that in a small neighborhood $B_{\delta}(P)$ around $P$ for some $\delta>0$, both $\partial B_{R_1}(a,P)\cap B_{\delta}(P)$ and $\partial B_{R_2}(b, P)\cap B_{\delta}(P)$ can be locally represented as radial graphs. We say that the initial hypersurfaces $\Sigma_0^{\epsilon}$'s satisfy the uniform interior (resp. exterior) local ball condition whenever, for all $\epsilon \geq 0$ sufficiently small and all $P\in \Gamma_{\epsilon}$, we have $\Sigma_0^{\epsilon} \cap B_{\delta}(P)\cap B_{R_1}(a, P) = \{P\}$ (resp. $\Sigma_0^{\epsilon} \cap B_{\delta}(P)\cap B_{R_2}(b, P)= \{P\}$, see Figure \ref{PIC2}), \input{P2.TpX} and the local radial graph $\partial B_{R_1}(a, P)\cap B_{\delta}(P)$ (resp. $\partial B_{R_2}(b, P)\cap B_{\delta}(P)$) has a \textit{uniform} Lipschitz bound depending only on the star-shapedness of $\Gamma$. If the $\Sigma^{\epsilon}_0$'s satisfy both of the uniform interior and exterior local ball conditions, then we say $\Sigma_0$ satisfies the uniform local ball condition. Such a uniform gradient bound on the asymptotic boundary was necessary for a version of maximum principle to be applicable in order to obtain a global gradient bound, which ensures the long time existence and convergence of the flow.
\vskip 2mm

In this paper we would like to show the long time existence of the MMCF without the \textit{uniform local ball condition} at the infinity of the initial hypersurface. To this end, we consider the MMCF starting from an entire locally Lipschitz continuous radial graph $\Sigma_0 \subset \mathbb{H}^{n+1}$ and show the long time existence of the flow. More precisely, we prove
 \begin{theorem}\label{mainthem}
 Let $\mathbf{F}_0 : \mathbb{S}^n_+ \rightarrow \mathbb{H}^{n+1}$ be such that $\Sigma_0 = \mathbf{F}_0(\mathbb{S}^n_+)$ is an entire locally Lipschitz continuous radial graph over $\mathbb{S}^n_+$. Then the Cauchy initial-boundary value problem for the MMCF \eqref{MMCF0} has a solution $\mathbf{F}(\mathbf{z},t) \in C^\infty(\mathbb{S}_+^n\times (0,\infty))\cap C^{0+1,0+1/2}(\mathbb{S}^n_+\times [0,\infty))$ and $\mathbf{F}(\mathbf{\mathbb{S}^n_+},t)$ is a complete radial graph over $\mathbb{S}^n_+$ for any $t\geq 0$.
\end{theorem}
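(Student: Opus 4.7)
The plan is to realize the required flow as a limit of solutions to approximating problems posed on a compact exhaustion of $\mathbb{S}^n_+$, and to establish interior a priori estimates that do not require the uniform local ball condition at the asymptotic boundary. Fix a smooth exhaustion $\Omega_1 \subset\subset \Omega_2 \subset\subset \cdots \subset\subset \mathbb{S}^n_+$ with $\bigcup_k \Omega_k = \mathbb{S}^n_+$, for instance $\Omega_k = \{\mathbf{z}\in \mathbb{S}^n_+:\, \mathbf{e}\cdot\mathbf{z}\geq 1/k\}$, and mollify the locally Lipschitz $v_0$ near each $\Omega_k$ to obtain smooth approximants $v_0^k\to v_0$ locally uniformly, with Lipschitz bound on $\Omega_k$ controlled by $\mathrm{Lip}(v_0|_{\Omega_{k+1}})$. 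On each $\Omega_k$ the coefficient $y^2$ is bounded below by $1/k^2$, so \eqref{MMCF1} is uniformly parabolic there; hence the result of \cite{LX12} (or classical quasilinear parabolic theory) supplies a smooth solution $v_k\in C^\infty(\overline{\Omega_k}\times(0,\infty))\cap C^{0}(\overline{\Omega_k}\times [0,\infty))$ that remains a radial graph, with time-independent Dirichlet data $v_0^k|_{\partial\Omega_k}$.

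The heart of the proof is to extract from $\{v_k\}$ a priori estimates uniform on compact subsets of $\mathbb{S}^n_+$. A $C^0$ bound follows from the parabolic maximum principle applied against upper and lower barriers built from vertically translated equidistance spheres, exactly as in \cite{LX12,GS00}. The decisive new ingredient is an interior gradient estimate replacing the boundary gradient estimate of \cite{LX12}. The idea is to adapt the support-function argument of that paper: compute the parabolic evolution of $\langle\vnu_E,x\rangle_E^{-1}$ (equivalently of $w$), multiply by a spatial cutoff $\eta\in C_c^\infty(\mathbb{S}^n_+)$ supported in a small geodesic ball $B\subset\subset \mathbb{S}^n_+$, and apply the maximum principle to the localized quantity $\eta^2 w$ on $B\times[0,T]$. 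The error terms produced by derivatives of $\eta$ are to be absorbed into the diffusion in the usual Ecker--Huisken style, yielding a positive lower bound on $\langle\vnu_E,x\rangle_E$ (equivalently an upper bound on $|\nabla v_k|$) on any compact $K\subset \mathbb{S}^n_+$, depending only on $K$, on the local oscillation of $v_0$, and on $\mathrm{Lip}(v_0|_{K'})$ for a slightly larger compact set $K'$. Once this is in hand, \eqref{MMCF1} is uniformly parabolic on $K\times[0,\infty)$, and Krylov--Safonov together with interior Schauder estimates deliver $C^{m,\alpha}_{loc}$ bounds of all orders on $\mathbb{S}^n_+\times(0,\infty)$, while the Lipschitz-in-space / H\"older-$\tfrac{1}{2}$-in-time estimate of Ladyzhenskaya--Solonnikov--Ural'tseva, applied to each uniformly parabolic piece, gives the $C^{0+1,0+1/2}$ bound up to $t=0$.

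Finally, Arzel\`a--Ascoli extracts a subsequence $v_k\to v$ converging in $C^\infty_{loc}(\mathbb{S}^n_+\times(0,\infty))$ and in $C^{0+1,0+1/2}_{loc}(\mathbb{S}^n_+\times[0,\infty))$; $v$ solves \eqref{MMCF1} on $\mathbb{S}^n_+\times(0,\infty)$, matches $v_0$ at $t=0$, and remains a radial graph by the uniform positive lower bound on $\langle\vnu_E,x\rangle_E$. The prescribed asymptotic values on $\Gamma$ are enforced via local equidistance-sphere barriers near each $\mathbf{z}_0\in\partial\mathbb{S}^n_+$, requiring only continuity of $\phi$ and no uniform control on the asymptotic boundary geometry. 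The main obstacle is clearly the interior gradient estimate of the second paragraph: everything downstream is fairly standard parabolic machinery, but without a spatially localized lower bound on $\langle\vnu_E,x\rangle_E$ independent of the approximation parameter $k$ and of the behavior of $\Sigma_0$ near $\partial_\infty\mathbb{H}^{n+1}$, neither the higher regularity nor the preservation of the radial graph property can be obtained, and the removal of the uniform local ball condition would fail. Localizing the support-function maximum principle from \cite{LX12} via a well-chosen spatial cutoff is thus the technical crux of the whole paper.
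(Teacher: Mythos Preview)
Your overall architecture---solve on a compact exhaustion using \cite{LX12}, prove interior estimates independent of the approximation parameter, pass to the limit by Arzel\`a--Ascoli, and at the end approximate locally Lipschitz data by smooth data---is exactly the strategy the paper follows, and you have correctly identified the localized lower bound on $\langle\vnu_E,x\rangle_E$ as the technical crux.

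Where you diverge from the paper is in \emph{how} the localization is carried out. You propose a purely spatial cutoff $\eta\in C^\infty_c(\mathbb{S}^n_+)$ and an Ecker--Huisken style absorption. The paper instead works intrinsically in $\mathbb{H}^{n+1}$: it takes $r(x)$ to be the hyperbolic distance from $x$ to the $x_{n+1}$-axis, computes the evolution of $\cosh r=|x|_E/x_{n+1}$ explicitly (Proposition~\ref{prop2}), and chooses the \emph{space--time} cutoff
\[
\eta \,=\, \cosh R \,-\, e^{(n+\sigma)t}\Bigl(\cosh r+\tfrac{\sigma}{n+\sigma}\Bigr),
\]
engineered so that $(\partial_t-\Delta)\eta\le 0$ when $\sigma\ge 0$. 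They then show $\xi=\eta^3\langle\vnu_E,x\rangle_E^{-1}$ satisfies $(\partial_t-\Delta)\xi\le (n+2)\xi$ and integrate. This hyperbolic-distance cutoff is what makes the $\sigma$-term in \eqref{evosupp} harmless; a generic cutoff pulled back from $\mathbb{S}^n_+$ would force you to compute $(\partial_t-\Delta)$ of a function of $\mathbf{z}=x/|x|_E$ along the flow, which is less clean. Your route is plausible but you have not indicated how the bad term $-\sigma\langle\vnu_E,\ve\rangle_E\langle\vnu_E,x\rangle_E$ is handled; the paper's choice of cutoff is precisely what absorbs it (cf.\ Remark~\ref{obstruction}).

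A second, smaller difference: for higher regularity you invoke Krylov--Safonov and Schauder, whereas the paper derives interior bounds on $|A|^2$ and $|\nabla^m A|^2$ directly from their evolution equations (Theorems~\ref{estonA2} and~\ref{estonA3}), again using $\cosh r$-based cutoffs. Either route gives what is needed; the paper's gives explicit constants and avoids appealing to black-box regularity theory.
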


\begin{remark}
By the work of Guan-Spruck \cite{GS00}, Xiao and the second author \cite{LX12}, given a $C^{1,1}$ star-shaped  $n-1$ dimensional closed submanifold at the infinity $\partial_\infty \mathbb{H}^{n+1}$, we can find a suitable initial hypersurface such that the MMCF exists for all time and converges to a hypersurface of constant mean curvature which has the given submanifold as the asymptotic boundary. On the other hand, MMCF, starting from a horosphere $\{x_{n+1} = c\}$ (whose infinity is degenerate to a point in $\partial_\infty \mathbb{H}^{n+1}$), exists for all time but never converges. Given such an example, one cannot expect the full convergence of the flow, as it depends on the behavior of the initial asymptotic boundary. We expect that some intermediate geometric condition that is weaker (i.e., allows degeneracy of the initial asymptotic boundary to some extent) than the \textit{uniform local ball condition} in \cite{LX12} will guarantee the convergence of the flow. This will be investigated in our forthcoming paper.
\end{remark}

The paper is organized as follows. In Section \ref{prelim}, we fix some notation and review some necessary preliminary materials. In Section \ref{interior}, we use the evolution equation of the support function $\langle\vnu_E,x\rangle_E $ (see Proposition \ref{prop1}) and an appropriate space-time cut-off function together with a conventional maximum principle argument to show a uniform interior gradient estimate for the MMCF (see Theorem \ref{intgrad}). In Section \ref{higherderiv}, we show the interior estimates on all other higher order derivatives for the MMCF (see Theorem \ref{estonA2} and Theorem \ref{estonA3}). We prove the main Theorem \ref{mainthem} in Section \ref{proofofmain}.

\section{preliminary}\label{prelim}
Let's first fix some notation. Operators without subscripts or superscripts are operators on $\Sigma_t$. Corresponding operators  in hyperbolic space, Euclidean space, or on $\mathbb{S}^n_+$ will be denoted with either a subscript or a superscript $H, E, S$, respectively. Greek  indices will range from 1 to $n+1$, while Latin  indices will range from 1 to $n$.

Denote $ds^2_H$ by $\langle \cdot, \cdot \rangle_H$, and $\nabla^H$ the Levi-Civita connection on $\mathbb{H}^{n+1}$. The ambient Riemann curvature tensor with respect to the hyperbolic metric used in this paper is
$$(R^H)(X,Y)Z = \nabla^H_Y\nabla^H_XZ - \nabla^H_X\nabla^H_YZ + \nabla^H_{[X,Y]}Z.$$ 

Let $\{\ve_\alpha\}_{\alpha= 1}^{n+1}$ be the coordinate basis of $\mathbb{H}^{n+1}$ with respect to the standard coordinates $x^\alpha$ of $\mathbb{R}^{n+1}_+$. Define $(R^H)_{\alpha\beta\gamma\delta} = \langle(R^H)(\ve_\alpha,\ve_\beta)\ve_\gamma,\ve_\delta\rangle_H$, the components of the hyperbolic Riemann curvature tensor. And define the components of the hyperbolic Ricci tensor
\begin{equation}\label{Ric comp}
(\text{Ric}^H)_{\alpha\gamma} = (ds^2_H)^{\beta\delta}(R^H)_{\alpha\beta\gamma\delta},
\end{equation}
where $ (ds^2_H)^{\alpha\gamma}$ is the inverse of the metric $ds^2_H$.

Since the upper-half space model of hyperbolic space $\mathbb{H}^{n+1}$ and $\mathbb{R}^{n+1}_+$ are conformal, we have
\begin{proposition}\label{prop2.1}
For any two vector fields $X,Y$ on $\mathbb{H}^{n+1}$,
$$\nabla^H_X Y = \nabla^E_X Y + \frac{1}{x_{n+1}}(\langle X, Y\rangle_E \ve - \langle X, \ve\rangle_E Y - \langle Y, \ve\rangle_E X),$$
where $\nabla^E$ denotes the Levi-Civita connection on $\mathbb{R}^{n+1}_+$ with respect to the standard Euclidean metric, $\langle\cdot,\cdot\rangle_E$ denotes the standard Euclidean inner product, and $\ve = \ve_{n+1}$.
\end{proposition}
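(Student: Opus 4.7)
The plan is to recognize that the upper half-space model carries a metric $ds^2_H = x_{n+1}^{-2}\, ds^2_E$ which is conformally equivalent to the Euclidean metric with conformal factor $e^{2f}$, where $f = -\log x_{n+1}$. One then invokes (or re-derives) the standard formula for how the Levi-Civita connection transforms under a conformal change $\tilde g = e^{2f} g$, namely
\begin{equation*}
\tilde\nabla_X Y \;=\; \nabla_X Y \;+\; X(f)\,Y \;+\; Y(f)\,X \;-\; g(X,Y)\,\nabla f,
\end{equation*}
and substitutes the specific $f$.

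I would first verify the conformal-change formula from Koszul's identity: both sides are $\mathbb{R}$-bilinear, the right-hand side is visibly $C^\infty$-linear in $X$ and satisfies the Leibniz rule in $Y$, so it defines a connection; a short computation against the conformally rescaled metric shows that it is torsion-free and metric-compatible with $\tilde g$, and uniqueness of the Levi-Civita connection identifies it with $\tilde\nabla$. This reduces the proposition to a direct calculation with $f = -\log x_{n+1}$.

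Next I would compute the Euclidean gradient and directional derivatives of $f$. Since $\partial_\alpha f = -\delta_{\alpha,n+1}/x_{n+1}$, we get $\nabla^E f = -x_{n+1}^{-1}\ve$ and
\begin{equation*}
X(f) \;=\; -\,\frac{\langle X,\ve\rangle_E}{x_{n+1}}, \qquad Y(f) \;=\; -\,\frac{\langle Y,\ve\rangle_E}{x_{n+1}}.
\end{equation*}
Plugging these into the conformal change formula with $\tilde\nabla = \nabla^H$ and $\nabla = \nabla^E$ yields exactly
\begin{equation*}
\nabla^H_X Y \;=\; \nabla^E_X Y \;+\; \frac{1}{x_{n+1}}\bigl(\langle X,Y\rangle_E\,\ve - \langle X,\ve\rangle_E\,Y - \langle Y,\ve\rangle_E\,X\bigr),
\end{equation*}
as claimed.

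There is no essential obstacle here: the only subtlety is the sign of $f$ (since $ds_H^2$ involves $x_{n+1}^{-2}$, not $x_{n+1}^{2}$), which must be tracked carefully so that the sign of the $\ve$-term comes out correctly. As a sanity check one can evaluate both sides on the coordinate vector fields $\ve_\alpha$ and recover the classical Christoffel symbols of the upper half-space hyperbolic metric, $\Gamma^{n+1}_{ii} = 1/x_{n+1}$ for $i\le n$, $\Gamma^{i}_{i,n+1} = -1/x_{n+1}$, $\Gamma^{n+1}_{n+1,n+1} = -1/x_{n+1}$, confirming the formula.
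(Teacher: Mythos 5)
Your proof is correct, and it follows exactly the route the paper indicates: the paper states this proposition with no proof beyond the remark that the half-space model is conformal to $\mathbb{R}^{n+1}_+$, and your argument is precisely the standard conformal-change computation (with $f=-\log x_{n+1}$, so $\nabla^E f=-x_{n+1}^{-1}\ve$) that this remark presupposes. The sign bookkeeping and the Christoffel-symbol sanity check are both right.
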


Let $\{\vv_i\}_{i = 1}^n$ be a basis of $T_p\Sigma_t$, and denote the induced metric on $\Sigma_t$ by $$g_{ij} = \langle \vv_i,\vv_j\rangle_H.$$
 Denote the second fundamental form on $\Sigma_t$ by
$$a_{ij} = \langle \nabla^H_{\vv_i}\vv_j, \vnu_H\rangle_H,$$ 
so that the mean curvature of $\Sigma_t$ with respect to the hyperbolic metric is
$$H = g^{ij}a_{ij},$$
where $g^{ij}$ is the inverse of $g_{ij}$. With these we have
\begin{proposition}
$$\kappa_i^H = x_{n+1} \kappa_i^E + \vnu^{n+1}\,,$$
where $\kappa_i^H$ and $\kappa_i^E$ are hyperbolic and Euclidean principle curvatures of $\Sigma_t$, respectively, and $\vnu^{n+1} = \langle \vnu_E, \ve\rangle_E$. Therefore,
$$H = x_{n+1}H^E+ n \vnu^{n+1} \,,$$
where $H^E$ is the Euclidean mean curvature and $\vnu_E$ is the Euclidean unit normal of $\Sigma_t$. That is, $\vnu_H = x_{n+1}\vnu_E$.
\end{proposition}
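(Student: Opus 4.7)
The plan is to use Proposition \ref{prop2.1}, which encodes the conformal relation between the hyperbolic and Euclidean Levi-Civita connections, to express the hyperbolic second fundamental form in terms of Euclidean data, then raise an index to read off the principal curvatures. There is no deep step here; the whole proof is a one-point, frame-independent calculation, and the main thing to watch is the consistent bookkeeping of the conformal factor.

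First I would pin down the unit normal. Since $ds^2_H = x_{n+1}^{-2}\,ds^2_E$, every vector $V$ has hyperbolic length $|V|_H = |V|_E/x_{n+1}$, and Euclidean orthogonality to $T_p\Sigma_t$ is preserved under the conformal change. Rescaling $\vnu_E$ to unit hyperbolic length and matching orientations then forces $\vnu_H = x_{n+1}\vnu_E$, which also yields the convenient identity $\langle\,\cdot\,,\vnu_H\rangle_H = x_{n+1}^{-1}\langle\,\cdot\,,\vnu_E\rangle_E$.

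Next, for a tangent frame $\{\vv_i\}$ of $\Sigma_t$, I would plug $X=\vv_i$, $Y=\vv_j$ into Proposition \ref{prop2.1}:
$$\nabla^H_{\vv_i}\vv_j \;=\; \nabla^E_{\vv_i}\vv_j + \frac{1}{x_{n+1}}\bigl(\langle\vv_i,\vv_j\rangle_E\,\ve - \langle\vv_i,\ve\rangle_E\vv_j - \langle\vv_j,\ve\rangle_E\vv_i\bigr),$$
and take the hyperbolic inner product with $\vnu_H$. The two tangential correction terms are killed by $\vnu_E$, leaving only the $\ve$-component contribution. The outcome is
$$a^H_{ij} \;=\; \frac{1}{x_{n+1}}\,a^E_{ij} + \frac{\vnu^{n+1}}{x_{n+1}^2}\,g^E_{ij},$$
where $g^E_{ij}=\langle\vv_i,\vv_j\rangle_E$ is the Euclidean induced metric and $\vnu^{n+1}=\langle\vnu_E,\ve\rangle_E$.

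Finally, because the induced metrics are themselves conformally related with the same factor, $g_H^{jk} = x_{n+1}^2\,g_E^{jk}$. Raising an index then gives the shape-operator identity
$$(a^H)_i{}^j \;=\; x_{n+1}\,(a^E)_i{}^j + \vnu^{n+1}\,\delta_i^j.$$
Since the hyperbolic and Euclidean shape operators differ by a scalar multiple of the identity, they share eigenvectors, and their eigenvalues are related by $\kappa_i^H = x_{n+1}\kappa_i^E + \vnu^{n+1}$. Summing over $i$ produces $H = x_{n+1}H^E + n\,\vnu^{n+1}$, which completes the proof. The only subtlety is tracking the factor of $x_{n+1}$ in three distinct roles, namely rescaling the normal, converting $\langle\cdot,\cdot\rangle_H$ to $\langle\cdot,\cdot\rangle_E$, and raising indices with the conformally rescaled induced metric; beyond that there is no obstacle.
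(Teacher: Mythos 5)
Your proposal is correct and follows essentially the same route as the paper: both rest on the conformal transformation of the second fundamental form and induced metric, and then compare eigenvalues (the paper via the roots of $\det(a_{ij}-\kappa^H g_{ij})$, you via the shape operator $(a^H)_i{}^j = x_{n+1}(a^E)_i{}^j + \vnu^{n+1}\delta_i^j$, a cosmetic difference). Your explicit derivation of $a_{ij} = x_{n+1}^{-1}a^E_{ij} + x_{n+1}^{-2}\vnu^{n+1}g^E_{ij}$ from Proposition \ref{prop2.1} supplies a step the paper omits and in fact corrects a sign typo in the first line of the paper's displayed computation (which is inconsistent with its own second line and conclusion).
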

\begin{proof} 
Note that the hyperbolic principle curvatures $\kappa_i^H$'s are the roots of 
\begin{align*}
\det \left( a_{ij} - \kappa^H g_{ij}\right) &= \det \left( \frac{a^E_{ij}}{x_{n+1}} - \frac{\vnu^{n+1}}{x_{n+1}^2} g_{ij}^E - \kappa^H \frac{g^E_{ij}}{x_{n+1}^2}\right) \\
&= x_{n+1}^{-n} \det \left( a_{ij}^E - \frac{\kappa^H -\vnu^{n+1}}{x_{n+1}} g_{ij}^E\right)\,,
\end{align*}
so that the proposition follows from
\begin{align*}
\kappa_i^E= \frac{1}{x_{n+1}} \left(\kappa_i^H - \vnu^{n+1}\right). 
\end{align*}
\end{proof}

\begin{proposition}\label{evolution-1}
 For a function $f : \Sigma_t \rightarrow \mathbb{R}$, where $\Sigma_t$ moves by \eqref{MMCF0}, we have
\begin{align*}
\left(\frac{\partial}{\partial t} - \Delta\right) f = & -x_{n+1}^2(\Delta_E f - \langle \nabla^E_{\vnu_E} \nabla^E f,\vnu_E\rangle_E)\\
& + x_{n+1}((n-2)\langle \nabla^E f, \ve\rangle_E  + 2\langle \nabla^E f, \vnu_E\rangle_E\langle\vnu_E,\ve\rangle_E - \sigma\langle \nabla^E f, \vnu_E\rangle_E),
\end{align*}
where $\Delta$ is the Laplace-Beltrami operator on $\Sigma_t$, $\frac{\partial}{\partial t} = F_*(\partial/\partial t) = (H - \sigma)\vnu_H$, $\Delta_E$ is the standard Euclidean Laplacian, and $\nabla^E f$ is the Euclidean gradient of $f$.
\end{proposition}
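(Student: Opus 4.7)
The plan is to convert both sides of the identity into Euclidean quantities and then exploit a clean cancellation of the two terms that involve the Euclidean mean curvature $H^E$. I would split the computation into three ingredients.

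First, for the time derivative, treating $f$ as an ambient function gives
\[ \partial_t f = \langle \nabla^H f,\,\mathbf{F}_t\rangle_H = (H-\sigma)\,\langle \nabla^H f,\vnu_H\rangle_H. \]
Using the conformal relations $\nabla^H f = x_{n+1}^2\,\nabla^E f$, $\vnu_H = x_{n+1}\vnu_E$, and $\langle\cdot,\cdot\rangle_H = x_{n+1}^{-2}\langle\cdot,\cdot\rangle_E$, this reduces to $(H-\sigma)\,x_{n+1}\langle \nabla^E f,\vnu_E\rangle_E$. Substituting $H = x_{n+1}H^E + n\vnu^{n+1}$ from the proposition just proven produces a distinguished term $x_{n+1}^2 H^E\langle\nabla^E f,\vnu_E\rangle_E$ together with $n\, x_{n+1}\vnu^{n+1}\langle\nabla^E f,\vnu_E\rangle_E - \sigma\, x_{n+1}\langle\nabla^E f,\vnu_E\rangle_E$.

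Second, for the hyperbolic Laplace--Beltrami operator, I would use that the induced metrics on $\Sigma_t$ are conformally related, $g^H = x_{n+1}^{-2}\,g^E$, and apply the standard conformal-change identity on an $n$-manifold with $\phi = -\log x_{n+1}$:
\[ \Delta f = x_{n+1}^2\,\Delta^{\Sigma_t}_E f + x_{n+1}^2(n-2)\,\langle \nabla^{\Sigma_t}_E \phi,\,\nabla^{\Sigma_t}_E f\rangle_E, \]
where $\Delta^{\Sigma_t}_E$ denotes the Laplace--Beltrami operator of $(\Sigma_t, g^E)$. Projecting $\nabla^E\phi = -\ve/x_{n+1}$ onto $T\Sigma_t$ and expanding reduces the conformal correction to $x_{n+1}(n-2)\bigl[\vnu^{n+1}\langle\nabla^E f,\vnu_E\rangle_E - \langle\nabla^E f,\ve\rangle_E\bigr]$. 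The Gauss formula together with the paper's convention $a^E_{ij} = \langle \nabla^E_{\vv_i}\vv_j,\vnu_E\rangle_E$ then yields
\[ \Delta^{\Sigma_t}_E f = \Delta_E f - \langle\nabla^E_{\vnu_E}\nabla^E f,\vnu_E\rangle_E + H^E\langle\nabla^E f,\vnu_E\rangle_E, \]
so that $x_{n+1}^2\Delta^{\Sigma_t}_E f$ contributes a further $x_{n+1}^2 H^E\langle\nabla^E f,\vnu_E\rangle_E$ to $\Delta f$.

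Assembling $\partial_t f - \Delta f$, the two $x_{n+1}^2 H^E\langle\nabla^E f,\vnu_E\rangle_E$ contributions cancel exactly, and the surviving $\vnu^{n+1}$-coefficients combine as $n - (n-2) = 2$. Recognising $\vnu^{n+1} = \langle\vnu_E,\ve\rangle_E$ then assembles the remaining pieces into exactly the stated right hand side. The main obstacle, modest but genuine, is tracking signs in the intrinsic Euclidean Laplacian formula under the paper's convention for the second fundamental form; once that is settled, the $H^E$-cancellation is the only nontrivial observation, and no tools beyond Proposition~\ref{prop2.1}, the curvature relation $H = x_{n+1}H^E + n\vnu^{n+1}$, and the conformal Laplacian identity are required.
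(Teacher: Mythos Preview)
Your argument is correct: the time-derivative computation, the conformal Laplacian identity for $g^H=x_{n+1}^{-2}g^E$ on $\Sigma_t$, the Gauss-type formula $\Delta_E^{\Sigma_t}f=\Delta_E f-\langle\nabla^E_{\vnu_E}\nabla^E f,\vnu_E\rangle_E+H^E\langle\nabla^E f,\vnu_E\rangle_E$, and the final $H^E$-cancellation all check out and assemble into the stated identity.

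The route, however, differs from the paper's. The paper does not pass through the intrinsic Euclidean Laplacian $\Delta_E^{\Sigma_t}$ or the conformal-change formula for Laplacians; instead it expands $\Delta f=\Div\nabla f$ directly by writing $\nabla=\nabla^H-\langle\nabla^H\cdot,\vnu_H\rangle_H\vnu_H$ and $\Div=\Div_H-\langle\nabla^H_{\vnu_H}\cdot,\vnu_H\rangle_H$, and then converts each hyperbolic operator to its Euclidean counterpart via Proposition~\ref{prop2.1} and $\Div_H=\Div_E-\tfrac{n+1}{x_{n+1}}\langle\cdot,\ve\rangle_E$. In that computation the surviving curvature term is the full hyperbolic $H\langle\nabla^E f,\vnu_H\rangle_E$, which cancels against the identical term in $\partial_t f$, rather than your $H^E$-cancellation. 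Your approach is more structural, invoking two standard Riemannian identities and keeping the bookkeeping short; the paper's is a single self-contained expansion that avoids importing the conformal Laplacian formula. The two are equivalent once one substitutes $H=x_{n+1}H^E+n\vnu^{n+1}$.
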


\begin{proof} Notice first 
\begin{align*}
\nabla f &=\, \nabla^H f - \langle \nabla^H f, \vnu_H\rangle_H\vnu_H\,,\\
\Div &=\, \Div_H - \langle\nabla^H_{\vnu_H}\cdot,\vnu_H\rangle_H\,,\\
\nabla^H f &=\, x_{n+1}^2\nabla^E f\,,\\
\Div_H &=\, \Div_E - \frac{n+1}{x_{n+1}}\langle \cdot, \ve\rangle_E\,.
\end{align*}
Along with Proposition \ref{prop2.1}, these give
\begin{align*}
\Delta f = &\Div\nabla f \\
= &\Div_H(\nabla^H f -\langle \nabla^H f, \vnu_H\rangle_H\vnu_H) -\langle\nabla^H_{\vnu_H}(\nabla^H f - \langle \nabla^H f, \vnu_H\rangle_H\vnu_H),\vnu_H\rangle_H\\
= & \Div_H\nabla^H f  -\langle \nabla^H f, \vnu_H\rangle_H\Div_H\vnu_H -\vnu_H \langle \nabla^H f, \vnu_H\rangle_H \\
&\- \langle\nabla^H_{\vnu_E} \nabla^H f,\vnu_E\rangle_E + \vnu_H \langle \nabla^H f, \vnu_H\rangle_H\\
= & \Div_H\nabla^H f - \langle\nabla^H_{\vnu_E} \nabla^H f,\vnu_E\rangle_E+ H\langle \nabla^H f, \vnu_H\rangle_H \\
= & \Div_E(x_{n+1}^2\nabla^E f) - (n+1)x_{n+1}\langle \nabla^E f, \ve\rangle_E  \\
&-\langle \nabla^E_{\vnu_E}(x_{n+1}^2\nabla^E f),\vnu_E\rangle_E -{x_{n+1}}\langle \vnu_E,\nabla^E f\rangle_E\langle \vnu_E,\ve\rangle_E \\
& + x_{n+1}\langle \vnu_E,\ve\rangle_E\langle \nabla^E f,\vnu_E\rangle_E  + x_{n+1}\langle \nabla^E f,\ve\rangle_E +H\langle \nabla^E f, \vnu_H\rangle\\
= & x_{n+1}^2\Div_E\nabla^E f + 2x_{n+1}\langle\nabla^E f, \ve\rangle_E -(n+1)x_{n+1}\langle \nabla^E f, \ve\rangle_E \\
& - x_{n+1}^2\langle \nabla^E_{\vnu_E}\nabla^E f,\vnu_E\rangle_E - 2x_{n+1}\langle \vnu_E,\ve\rangle_E \langle\nabla^E f,\vnu_E\rangle_E \\
&+ x_{n+1}\langle \nabla^E f,\ve\rangle_E +H\langle \nabla^E f, \vnu_H\rangle_E \\
= & x_{n+1}^2(\Delta_E f - \langle \nabla^E_{\vnu_E}\nabla^E f,\vnu_E\rangle_E) -x_{n+1}((n-2)\langle \nabla^E f, \ve\rangle_E\\
& - 2\langle\vnu_E,\ve\rangle_E\langle\nabla^E f,\vnu_E\rangle_E) +H\langle \nabla^E f, \vnu_H\rangle_E.
\end{align*}
Combining this with
$$\frac{\partial}{\partial t}f = (H - \sigma)\vnu_H f = H\langle \nabla^E f, \vnu_H\rangle_E - x_{n+1}\sigma\langle \nabla^E f, \vnu_E\rangle_E$$
gives the desired result.
\end{proof}

Now note that the Riemann curvature tensor is 
$$(R^H)_{\alpha\beta\gamma\delta} = \langle (R^H)(\ve_{\alpha},\ve_{\beta})\ve_\gamma,\ve_\delta\rangle_H = \delta_{\alpha\delta}\delta_{\beta\gamma}-\delta_{\alpha\gamma} \delta_{\beta\delta}\,,$$
since $\mathbb{H}^{n+1}$ has constant sectional curvature $-1$. In particular, $\nabla R^H = 0$. Also, the Gauss equation in this setting reads as
$$\text{Gauss: } {R}_{ijkl} = a_{ik}a_{jl} - a_{il}a_{jk}+ (R^H)_{ijkl},$$
where the index $0$ denotes the $\vnu_H$ direction. Note also that we have the interchange of two covariant derivatives on a two tensor:
$$\nabla_j\nabla_ia_{kl} = \nabla_i\nabla_ja_{kl} +a_{km} {R}^{\quad m}_{jil} +a_{lm} {R}^{\quad m}_{jik}\,,$$
where ${R}^{\quad m}_{ijk} = g^{ml}R_{ijkl}$. Using these equations one can derive the following well-known Simons' identity.

\begin{lemma} On $\Sigma_t \subset \mathbb{H}^{n+1}$, we have\\
(i) (Simons' identity)
$$\Delta  a_{ij} = \nabla_i\nabla_j H + Ha_{mi}a^m_j - |A|^2a_{ij} -na_{ij} + H\delta_{ij},$$
where $\Delta$ is the Laplacian for tensors on $\Sigma_t$, $\nabla$ the  covariant derivative on $\Sigma_t$, $\nabla_i = \nabla_{\vv_i}$ and $A = (a_{ij})$ the second fundamental form on $\Sigma_t$, all with respect to the induced hyperbolic metric.\\
(ii) \quad $\Delta |A|^2 = 2a^{ij}\nabla_i\nabla_j H + 2H \text{Tr}(A^3) - 2|A|^4 -2n|A|^2 + 2H^2 + 2|\nabla A|^2.$
\end{lemma}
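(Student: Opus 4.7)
The plan is to prove (i) by the standard three-ingredient recipe --- Codazzi, Ricci commutation identity, and Gauss equation --- and then derive (ii) from (i) by a direct algebraic computation. The cleanness of hyperbolic space (constant sectional curvature and $\nabla R^H = 0$) will make each step essentially mechanical; the lemma is really just a packaging of these three pieces.

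To start (i), I would write $\Delta a_{ij} = g^{kl}\nabla_k\nabla_l a_{ij}$ and first simplify using Codazzi. The Codazzi equation in a general ambient $\bar{M}$ says $\nabla_i a_{jk} - \nabla_j a_{ik} = \langle R^{\bar{M}}(\ve_i,\ve_j)\ve_k,\vnu_H\rangle_H$; plugging in the explicit formula $(R^H)_{\alpha\beta\gamma\delta} = \delta_{\alpha\delta}\delta_{\beta\gamma} - \delta_{\alpha\gamma}\delta_{\beta\delta}$ and using that $\ve_i,\ve_j,\ve_k$ are tangent while $\vnu_H$ is normal forces the right-hand side to vanish. Hence $\nabla a$ is fully symmetric in its three indices, so one may rewrite
$$g^{kl}\nabla_k\nabla_l a_{ij} \;=\; g^{kl}\nabla_k\nabla_i a_{jl}.$$
Apply the commutation identity stated in the paper to swap $\nabla_k$ and $\nabla_i$, producing two correction terms of the schematic form $a\cdot R$ built from the intrinsic curvature of $\Sigma_t$. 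A further use of Codazzi converts the principal piece $\nabla_i\nabla_k a_{jl}$ into $\nabla_i\nabla_j a_{kl}$, and contracting with $g^{kl}$ gives the first term $\nabla_i\nabla_j H$ in (i).

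Next, I would feed the Gauss equation $R_{ijkl} = a_{ik}a_{jl} - a_{il}a_{jk} + (g_{il}g_{jk} - g_{ik}g_{jl})$ into the two correction terms, where the ambient piece $g_{il}g_{jk} - g_{ik}g_{jl}$ is the restriction of $(R^H)_{ijkl}$ to tangent vectors. Carefully contracting the result, the $a_{ik}a_{jl}$ pieces collapse (via $a^l_{\ l} = H$) to $H a_{mi}a^m_j$, the $a_{il}a_{jk}$ pieces collapse to $-|A|^2 a_{ij}$, and the constant-curvature pieces contribute exactly $H g_{ij} - n a_{ij}$ (with $g_{ij}$ written as $\delta_{ij}$ in the paper's local-frame notation). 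Assembling everything gives (i). For (ii), the tensorial product rule yields
$$\Delta|A|^2 \;=\; 2a^{ij}\Delta a_{ij} \;+\; 2|\nabla A|^2;$$
substituting (i) on the right and then using the contractions $a^{ij}g_{ij} = H$, $a^{ij}a_{ij} = |A|^2$, and $a^{ij}a_{mi}a^m_j = \Tr(A^3)$ converts each term of (i) into its counterpart in (ii) immediately.

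The main obstacle is purely bookkeeping --- no step is conceptually hard. Some care is needed because the paper adopts the nonstandard convention $R^H(X,Y)Z = \nabla^H_Y\nabla^H_X Z - \nabla^H_X\nabla^H_Y Z + \nabla^H_{[X,Y]}Z$, so the signs in the ambient contribution $-na_{ij} + H\delta_{ij}$ must be tracked consistently with the Ricci-identity and Gauss-equation formulas quoted just before the lemma. Provided one is disciplined about this sign convention and about the order of indices in the Ricci commutator, the chain Codazzi $\to$ commutation $\to$ Gauss produces exactly the five terms listed in (i), and (ii) then follows in a single line.
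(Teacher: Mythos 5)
Your proposal is correct and follows essentially the same route as the paper: normal coordinates, Codazzi to symmetrize $\nabla a$, the commutation identity producing two $a\cdot R$ terms, the Gauss equation with the constant-curvature form of $R^H$ to evaluate them, and then (ii) via $\Delta|A|^2 = 2a^{ij}\Delta a_{ij} + 2|\nabla A|^2$. No gaps.
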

\begin{proof} We include a proof for the sake of completeness. See also \cite{H86} for general ambient manifolds. Fix a point on $\Sigma_t$. We will work on a normal coordinate at this point. For (i), we have
\begin{align*}
\Delta a_{ij} &  = \,\nabla_k\nabla_k a_{ij} = \,\nabla_k\nabla_j a_{ik}  \\
& = \,\nabla_i\nabla_ka_{jk} + a_{jl}{R}^{\quad l}_{ kik} + a_{kl}{R}^{\quad l}_{kij}\\
& = \,\nabla_i\nabla_jH + a_{j}^l(a_{kk}a_{il}-a_{kl}a_{ik}+(R^H)_{kik}^{\quad l}) + a_{kl}(a_{kj}a_{il}-a_{kl}a_{ij}+(R^H)_{kij}^{\quad l})\\
& = \,\nabla_i\nabla_j H + Ha_{il}a^l_j + a_{jl}(\delta_{kl}\delta_{ik}-\delta_{kk}\delta_{il})- |A|^2a_{ij} + a_{kl}(\delta_{kl}\delta_{ij}-\delta_{jk}\delta_{il})\\
& = \,\nabla_i\nabla_j H +Ha_{il}a^l_j- |A|^2a_{ij}  -na_{ij} + H\delta_{ij}.
\end{align*}
For (ii), we have
\begin{align*}
\Delta |A|^2 &  = \,2a^{ij}\Delta a_{ij} + 2|\nabla  A|^2\\
& =  \,2a^{ij}\nabla_i\nabla_j H + 2H\text{Tr}(A^3) - 2|A|^4 -2n|A|^2 + 2H^2 + 2|\nabla A|^2\,.\qedhere
\end{align*}
\end{proof}

In order to obtain the estimates on higher order derivatives, we also need the evolution equation for the second fundamental forms.
\begin{lemma} \label{evolution-ofA}
On $\Sigma_t \subset \mathbb{H}^{n+1}$, we have
\begin{align*}
&(i)  \quad \frac{\partial}{\partial t}  a_{ij} \,=\, \nabla_i\nabla_j H -(H - \sigma)a^k_ia_{jk} + (H-\sigma)(R^H)_{i0j0}\,,\\
&(ii) \quad \frac{\partial}{\partial t}   |A|^2 \,= \, 2a^{ij}\nabla_i\nabla_jH +2(H-\sigma)\text{Tr}(A^3)-2H(H-\sigma)\,,\\
&(iii)\quad \left(\frac{\partial}{\partial t} -\Delta\right)  |A|^2 \,= \, 2|A|^4 + 2n|A|^2 -2|\nabla A|^2-4H^2+2\sigma(H - \text{Tr} (A^3) )\,.
\end{align*}

\end{lemma}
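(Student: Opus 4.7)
The plan is to follow Huisken's classical derivation of the evolution equations for $a_{ij}$ and $|A|^2$ in a Riemannian ambient manifold (cf.~\cite{Huisken86}), specialized to the MMCF speed $f = H - \sigma$ in place of $H$ and to the ambient $\mathbb{H}^{n+1}$, where $\nabla^H R^H = 0$ and $(R^H)_{i0j0} = -g_{ij}$ follow from the explicit formula $(R^H)_{\alpha\beta\gamma\delta} = \delta_{\alpha\delta}\delta_{\beta\gamma} - \delta_{\alpha\gamma}\delta_{\beta\delta}$ already recorded above.

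For (i), I would first compute the basic time derivatives
\[
\frac{\partial g_{ij}}{\partial t} = -2(H-\sigma) a_{ij},\qquad \frac{\partial \vnu_H}{\partial t} = -\nabla H,
\]
by differentiating $g_{ij} = \langle\partial_i\mathbf{F},\partial_j\mathbf{F}\rangle_H$ and $\langle\vnu_H,\vnu_H\rangle_H = 1$ in conjunction with $\partial_t\mathbf{F} = (H-\sigma)\vnu_H$ (noting $\sigma$ is constant, so $\nabla\sigma = 0$). Then I would differentiate $a_{ij} = \langle\nabla^H_{\partial_i}\partial_j\mathbf{F},\vnu_H\rangle_H$ in $t$ and swap the time derivative past the hyperbolic covariant derivatives. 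The commutator is exactly the ambient curvature, producing the $(H-\sigma)(R^H)_{i0j0}$ term; the Weingarten relation $\nabla^H_{\partial_i}\vnu_H = -a^k_i\,\partial_k\mathbf{F}$ produces the $-(H-\sigma)a^k_i a_{jk}$ term; and the tangential Hessian of $f = H - \sigma$ contributes $\nabla_i\nabla_j H$. Assembling these yields (i).

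For (ii), I would differentiate $|A|^2 = g^{ik}g^{jl}a_{ij}a_{kl}$, using the inverse-metric evolution $\frac{\partial g^{ij}}{\partial t} = 2(H-\sigma)a^{ij}$. The two metric derivatives together contribute $4(H-\sigma)\text{Tr}(A^3)$, while $2a^{ij}\partial_t a_{ij}$, evaluated via (i), contributes $2a^{ij}\nabla_i\nabla_j H - 2(H-\sigma)\text{Tr}(A^3) - 2(H-\sigma)H$, where the last summand uses $a^{ij}(R^H)_{i0j0} = -a^{ij}g_{ij} = -H$. Summing these produces (ii).

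For (iii), I would subtract the Simons identity
\[
\Delta|A|^2 = 2a^{ij}\nabla_i\nabla_j H + 2H\,\text{Tr}(A^3) - 2|A|^4 - 2n|A|^2 + 2H^2 + 2|\nabla A|^2
\]
from the identity (ii). The $a^{ij}\nabla_i\nabla_j H$ terms cancel exactly, and the remaining algebra, namely
\[
2(H-\sigma)\text{Tr}(A^3) - 2H\,\text{Tr}(A^3) = -2\sigma\,\text{Tr}(A^3),\qquad -2H(H-\sigma) - 2H^2 = -4H^2 + 2\sigma H,
\]
gives (iii). No step is genuinely hard; the main point of care throughout is to track the paper's sign convention for $R^H$ (reversed from the most common one) when commuting time and tangential derivatives, and to exploit that $\sigma$ is a constant so $\nabla\sigma \equiv 0$.
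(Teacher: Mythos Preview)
Your proposal is correct and follows essentially the same approach as the paper: for (i) you differentiate $a_{ij}$, commute $\partial_t$ past the ambient covariant derivatives picking up the curvature term, and use the Weingarten map; for (ii) you combine (i) with the evolution of $g^{ij}$; and for (iii) you subtract the Simons identity from (ii). The paper carries this out in normal coordinates on $\Sigma_t$ (so $\nabla^H_{\vv_i}\vv_j = a_{ij}\vnu_H$ and the $\partial_t\vnu_H$ contribution to $\partial_t a_{ij}$ vanishes automatically), which streamlines the bookkeeping slightly, but the substance is identical.
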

\begin{proof}

(i) The evolution equation for $a_{ij}$ along the mean curvature flow in general Riemannian manifold can be found in \cite{H86}. Here, for completeness, we prove it in our setting. Note that $\nabla^H_{\vv_i}\vv_j = a_{ij}\vnu_H$, we compute
\begin{align*}
\frac{\partial}{\partial t} a_{ij} 
&=\langle \nabla^H_{\frac{\partial}{\partial t}}\nabla^H_{\vv_i}\vv_j,\vnu_H\rangle_H+\langle\nabla^H_{\vv_i}{\vv_j}, \nabla^H_{\frac{\partial}{\partial t}}{\vnu}\rangle_H \\
&=\langle \nabla^H_{\vv_i}\nabla^H_{\vv_j}\frac{\partial}{\partial t},\vnu_H\rangle_H+\langle (R^H)(\vv_i, {\partial}/{\partial t})\vv_j,\vnu_H\rangle_H+\langle\nabla^H_{\vv_i}{\vv_j}, -\nabla{H}\rangle_H \\
& = \, \langle \nabla^H_{\vv_i}\nabla^H_{\vv_j}((H-\sigma)\vnu_H),\vnu_H\rangle_H + (H-\sigma)(R^H)_{i0j0}-\Gamma^k_{ij}\vv_k(H)\\
& = \,\langle \nabla^H_{\vv_i}(\nabla^H_{\vv_j}H\vnu_H)- \nabla^H_{\vv_i}((H-\sigma)a_j^k\vv_k),\vnu_H\rangle_H + (H-\sigma)(R^H)_{i0j0}-\Gamma^k_{ij}\vv_k(H)\\
& = \nabla_i^H(\nabla_j^H H)-(H - \sigma)a^k_ia_{jk} + (H-\sigma)(R^H)_{i0j0}-\Gamma^k_{ij}\vv_k(H)\\
& = \vv_i(\vv_j(H))-(H - \sigma)a^k_ia_{jk} + (H-\sigma)(R^H)_{i0j0}-\Gamma^k_{ij}\vv_k(H)\\
& =\nabla_i\nabla_j H -(H - \sigma)a^k_ia_{jk} + (H-\sigma)(R^H)_{i0j0},
\end{align*}
where $\nabla{H}=g^{rs}\vv_r(H)\vv_s$. Suppose $\{x_i\}$ is a local coordinate on $\mathbb{S}^n_+$, then $\vv_i=F_*(\frac{\partial}{\partial x_i})$ and $\vv_i(H)=\frac{\partial H}{\partial x_i}$, $ \vv_i(\vv_j(H))=\frac{\partial^2H}{\partial x_i\partial x_j}$.

(ii) Notice $\frac{\partial}{\partial t} g^{ij} = 2(H-\sigma)g^{ik}g^{jl}a_{kl}$, so that
\begin{align*}
\frac{\partial}{\partial t} |A|^2 =& \, \frac{\partial}{\partial t} \left(g^{ij}g^{kl}a_{ik}a_{jl}\right)\\
 =& \,4(H-\sigma)a^{ij}a_{ik}a^k_j+2a^{ij}\left(\nabla_i\nabla_jH - (H-\sigma)a_{ik}a^k_j + (H-\sigma)(R^H)_{i0j0}\right)\\
 =&  \, 2a^{ij}\nabla_i\nabla_jH +2(H-\sigma)\text{Tr}(A^3)-2H(H-\sigma).
\end{align*}

(iii) Combining (ii) with the Simons' identity.
\end{proof}

Finally, we note that there is a $C^0$-estimate that comes for free.
\begin{remark} \label{remark1} Notice $|x|_E$ is bounded above on any compact region of $\Sigma_t$, by the same constant, for all time. To see this, there exist, for any $r > 0$, caps $\{ (x_1,\dots,x_{n+1}) \in \mathbb{H}^{n+1}  : (x_1)^2 + \cdots (x_n)^2 + (x_{n+1} +\sigma r/n)^2 = r^2\}$, with constant hyperbolic mean curvature $\sigma$. These caps have bounded $|x|_E$. The result follows from a comparison principle for MMCF. That is, the ratio of the Euclidean radial height above a fixed point in $\partial_\infty\mathbb{H}^{n+1}$ between two hypersurfaces (with one compact) moving by MMCF in hyperbolic space is non-decreasing in time.
\end{remark}

\section{Interior gradient estimates}\label{interior}

The MMCF \eqref{MMCF0} for complete radial graphs is a (degenerate) quasi-linear parabolic PDE, see \eqref{MMCF1}. We would like to use the conventional maximum principle techniques to obtain interior estimates. Similar interior estimates were obtained in \cite[Section 9]{LX12} using the same techniques. However, the estimate there is not uniform in $\epsilon$ and therefore it is not sufficient in our current case. In order to overcome the degeneracy at infinity of the PDE and achieve the uniform interior estimate, we first need to find an appropriate space-time cut-off function. To do so, we let $r(x)$ be the hyperbolic distance from a point $x \in\mathbb{H}^{n+1}$ to the $x_{n+1}$-axis. Then 
$$\cosh r = \frac{|x|_E}{x_{n+1}}\,,$$ where $|x|_E = \sqrt{\langle x,x\rangle_E}$, see e.g. \cite[Cor. A.5.8.]{BP}. In the following, we let $\mathbf{z} = \frac{x}{|x|_E}$.

\begin{proposition}\label{prop2}
$$\left(\frac{\partial}{\partial t} - \Delta \right)\cosh r= \frac{1}{\cosh r}(1 - \langle \vnu_E, \mathbf{z}\rangle_E^2) - (n - \sigma\langle \vnu_E, \ve\rangle_E)\cosh r - \sigma\langle \vnu_E, \mathbf{z}\rangle_E.$$ 
\end{proposition}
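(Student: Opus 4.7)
The identity will follow from a direct application of Proposition \ref{evolution-1} to the function $f(x) = \cosh r(x) = |x|_E/x_{n+1}$, so the only work is to compute the three Euclidean quantities appearing on the right-hand side of that formula: $\nabla^E f$, $\Delta_E f$, and $\langle \nabla^E_{\vnu_E}\nabla^E f,\vnu_E\rangle_E$, and then perform the cancellations.

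The plan is to write $f = \rho/y$ with $\rho = |x|_E$ and $y = x_{n+1}$, and use the standard identities $\nabla^E \rho = \mathbf{z}$, $\Delta_E \rho = n/\rho$, $\nabla^E y = \ve$, $\Delta_E y = 0$, $\nabla^E_V \mathbf{z} = (V - \langle V,\mathbf{z}\rangle_E \mathbf{z})/\rho$, and $\nabla^E\nabla^E y = 0$. From the quotient rule,
\[
\nabla^E f = \frac{\mathbf{z}}{y} - \frac{\rho}{y^2}\ve, \qquad \Delta_E f = \frac{n-2}{y\rho} + \frac{2\rho}{y^3}.
\]
Contracting the Hessian of $f$ twice with $\vnu_E$ and writing $\nu_z := \langle \vnu_E,\mathbf{z}\rangle_E$, $\nu_{n+1} := \langle \vnu_E,\ve\rangle_E$, a straightforward computation yields
\[
\langle \nabla^E_{\vnu_E}\nabla^E f,\vnu_E\rangle_E \;=\; \frac{1-\nu_z^2}{y\rho} \;-\; \frac{2\nu_{n+1}\nu_z}{y^2} \;+\; \frac{2\rho\,\nu_{n+1}^2}{y^3}.
\]

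Next, I would substitute these three expressions, together with $\langle \nabla^E f,\ve\rangle_E = 1/\rho - \rho/y^2$ and $\langle \nabla^E f,\vnu_E\rangle_E = \nu_z/y - \rho\nu_{n+1}/y^2$, into the formula of Proposition \ref{evolution-1}. After multiplying out by $-y^2$ and $y$ as prescribed, the $(n-2)y/\rho$ terms cancel between the Laplacian contribution and the first lower-order term, and the $2\nu_{n+1}\nu_z$ terms cancel between the Hessian contribution and the $2\langle\nabla^E f,\vnu_E\rangle_E\langle\vnu_E,\ve\rangle_E$ term. What remains collapses, using $\rho/y = \cosh r$ and $y/\rho = 1/\cosh r$, to
\[
\frac{1}{\cosh r}(1-\nu_z^2) - (n-\sigma\,\nu_{n+1})\cosh r - \sigma\,\nu_z,
\]
which is the claimed identity.

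The main obstacle is simply bookkeeping: keeping track of the various $y$ and $\rho$ powers and confirming that the $(n-2)y/\rho$ and $2\nu_{n+1}\nu_z$ terms cancel cleanly. There is no conceptual difficulty — the computation is forced once one chooses the correct Euclidean quantities and invokes Proposition \ref{evolution-1}.
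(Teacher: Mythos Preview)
Your proposal is correct and follows essentially the same approach as the paper: both apply Proposition~\ref{evolution-1} to $f=\cosh r=|x|_E/x_{n+1}$, compute the Euclidean gradient, Laplacian, and normal Hessian, and then simplify. The only cosmetic difference is that the paper first differentiates $|x|_E$ and $x_{n+1}^{-1}$ separately and combines via the product rule, whereas you work directly with the quotient $\rho/y$; your intermediate expressions for $\nabla^E f$, $\Delta_E f$, and $\langle\nabla^E_{\vnu_E}\nabla^E f,\vnu_E\rangle_E$ agree with the paper's, and the cancellations you identify are exactly the ones that occur.
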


\begin{proof} Notice 
\begin{align*}
\nabla^E |x|_E \,&=\, \mathbf{z}\,,\\
\nabla^E_{\vnu_E}\nabla^E |x|_E \,&=\, \nabla^E_{\vnu_E} \mathbf{z} \,=\, \vnu_E |x|_E^{-1}x + |x|_E^{-1}\vnu_E \,=\, -|x|_E^{-1}\langle \mathbf{z}, \vnu_E\rangle_E \mathbf{z} +  |x|_E^{-1}\vnu_E\,,\\
\Delta_E |x|_E \,&=\, \Div_E\mathbf{z} = -|x|_E^{-1} +|x|_E^{-1}(n+1) = n|x|_E^{-1}\,.
\end{align*}
Moreover, we have
\begin{align*}
\nabla^E x_{n+1}^{-1} \,&=\, -x_{n+1}^{-2}\ve\,,\\
\nabla^E_{\vnu_E}\nabla^E x_{n+1}^{-1} \,&=\, 2x_{n+1}^{-3}\langle \ve,\vnu_E\rangle_E \ve\,,\\
\Delta_E x_{n+1}^{-1} \,&=\, 2x_{n+1}^{-3}\,,\\
\nabla^E \cosh r \,&= \,x_{n+1}^{-1}\mathbf{z} -x_{n+1}^{-2}|x|_E\ve \,=\, x_{n+1}^{-1} \mathbf{z} - x_{n+1}^{-1} (\cosh r) \ve\,,\\
x_{n+1}\nabla^E \cosh r \,&=\, \mathbf{z} - (\cosh r)\ve\,,
\end{align*}
and
\begin{align*}
&\nabla^E_{\vnu_E} \nabla^E \cosh r \\
=&\, \nabla^E_{\vnu_E}(x_{n+1}^{-1} \mathbf{z} - x_{n+1}^{-1} (\cosh r) \ve)\\
= & \,-x_{n+1}^{-2}\langle \vnu_E, \ve\rangle_E \mathbf{z} + x_{n+1}^{-1}( -|x|_E^{-1}\langle \mathbf{z}, \vnu_E\rangle_E \mathbf{z} +  |x|_E^{-1}\vnu_E) + x_{n+1}^{-2}\langle \vnu_E, \ve\rangle_E(\cosh r)\ve \\
& -x_{n+1}^{-1}\langle x_{n+1}^{-1} \mathbf{z} - x_{n+1}^{-1} (\cosh r) \ve, \vnu_E\rangle_E\\
= & \,x_{n+1}^{-2}\bigg(-\langle\ve,\vnu_E\rangle_E\mathbf{z} -\frac{1}{\cosh r}\langle \mathbf{z}, \vnu_E\rangle_E\mathbf{z} + \frac{1}{\cosh r}\vnu_E -\langle \mathbf{z},\vnu_E\rangle_E\ve + 2\cosh r\langle \ve,\vnu_E\rangle_E\ve\bigg)\,.
\end{align*}

Now, since $\langle \mathbf{z} , \ve\rangle_E = \frac{1}{\cosh r}$, we have
\begin{align*}
\Delta_E\cosh r =& \, \Delta_E x_{n+1}^{-1}|x|_E\\
= & \, 2\langle \nabla^E x_{n+1}^{-1}, \nabla^E |x|_E \rangle_E +x_{n+1}^{-1}\Delta_E |x|_E + |x|_E\Delta_Ex_{n+1}^{-1}\\
= &\, x_{n+1}^{-2}\left((n-2)\frac{1}{\cosh r}  +2\cosh r\right)\,.
\end{align*}

Therefore, we finally arrive at
\begin{align*}
\left(\frac{\partial}{\partial t} - \Delta\right) \cosh r = &\, -x_{n+1}^2(\Delta_E\cosh r - \langle \nabla^E_{\vnu_E} \nabla^E\cosh r,\vnu_E\rangle_E) \\
& + x_{n+1}[(n-2)\langle \nabla^E\cosh r, \ve\rangle_E  + 2\langle \nabla^E\cosh r, \vnu_E\rangle_E\langle \ve, \vnu_E\rangle_E\\
& - \sigma\langle \nabla^E\cosh r, \vnu_E\rangle_E] \\
= &\, (2-n)\langle \mathbf{z}, \ve\rangle_E  -2\cosh r-\frac{1}{\cosh r}\langle \mathbf{z}, \vnu_E\rangle_E^2+ \frac{1}{\cosh r}\\
& -2\langle \mathbf{z},\vnu_E\rangle_E \langle \ve,\vnu_E\rangle_E  + 2\cosh r\langle \ve,\vnu_E\rangle_E^2 \\
& + (n-2)\langle \mathbf{z},\ve\rangle_E -(n-2)\cosh r + 2\langle \mathbf{z},\vnu_E\rangle_E\langle \ve,\vnu_E\rangle_E \\
& - 2\cosh r\langle \ve,\vnu_E\rangle_E^2 - \sigma\langle \mathbf{z},\vnu_E\rangle_E + \sigma \cosh r\langle \ve,\vnu_E\rangle_E \\
= &\, \frac{1}{\cosh r}(1 - \langle \vnu_E,\mathbf{z}\rangle_E^2) - (n - \sigma\langle \ve, \vnu_E \rangle_E)\cosh r- \sigma\langle \mathbf{z},\vnu_E\rangle_E. \qedhere
\end{align*}
\end{proof}

\vskip 2mm
Now, for any $R > 0$, we define a space-time cut-off function (c.f. \cite{U03})
$$\eta = \cosh R - e^{(n+\sigma)t}\left(\cosh r + \frac{\sigma}{n+\sigma}\right)\,.$$ 
Then, for $\sigma \geq 0$ we have
\begin{align*}
\left(\frac{\partial}{\partial t} - \Delta\right)\eta  =&\,  -e^{(n+\sigma)t}\left((n+\sigma)\cosh r +\sigma + \left(\frac{\partial}{\partial t} - \Delta\right)\cosh r\right) \\
=&\, -e^{(n+\sigma)t}\bigg[(n+\sigma)\cosh r +\sigma +  \frac{1}{\cosh r}(1 - \langle \vnu_E,\mathbf{z}\rangle_E^2)\\
& - (n - \sigma\langle \ve, \vnu_E\rangle_E)\cosh r - \sigma\langle \mathbf{z},\vnu_E\rangle_E\bigg] \\
= &\,  -e^{(n+\sigma)t}\bigg[ \frac{1}{\cosh r}(1 - \langle \vnu_E,\mathbf{z}\rangle_E^2)  + \sigma(1-\langle \mathbf{z},\vnu_E\rangle_E \\
& + \cosh r(1 + \langle \ve, \vnu_E\rangle_E))\bigg] \leq \,0 \,.
\end{align*}

\begin{remark}
We will only deal with the case of $\sigma \geq 0$. The case of $\sigma <0$ can be handled using the hyperbolic isometric reflection $x^\ast = \frac{x}{|x|_E^2}$ w.r.t. $\mathbb{S}_+^n$.
\end{remark}

\begin{remark}
Notice that 
$$
\vnu_E = \frac{\mathbf{z} - \nabla v}{\sqrt{1+|\nabla v|^2}} \quad \text{and}\quad \left\langle \vnu_E, \mathbf{z}\right\rangle_E = \frac{1}{|x|_E} \left\langle \vnu_E, x\right\rangle_E = \frac{1}{\sqrt{1+|\nabla v|^2}}\,.
$$
Therefore, in order to get the interior gradient estimate on $|\nabla v|$, we will need to get a positive lower bound on $\left\langle \vnu_E, \mathbf{z}\right\rangle_E$, which is (almost) equivalent to $\left\langle \vnu_E, x\right\rangle_E = x_{n+1} \left\langle \vnu_H, x\right\rangle_H$, thanks to the $C^0$-estimate on $|x|_E$ using appropriate barriers (see Remark \ref{remark1}). Thus, in the following we will first look at the evolution equation of $\left\langle \vnu_H, x\right\rangle_H$ and finally arrive at the evolution equation of $\left\langle \vnu_E, x\right\rangle_E$ (see Proposition \ref{prop1}). Then the cut-off function and maximum principle techniques apply conventionally.
\end{remark}

From here on suppose the $\vv_i$'s are in fact a normal coordinate basis of  $T_p\Sigma_t$ with respect to the hyperbolic metric. We may extend the vector fields $\vv_i$ and $\vnu_H$ on $\Sigma_t$ to a neighborhood of $\mathbb{H}^{n+1}$ by requiring that $\vv_i$ is constant along the integral curves of $x$, so that $[\vv_i,x]  = [\vnu_H,x] = 0$, where, e.g., $[\vv_i,x]$ is the Lie bracket of $\vv_i$ and $x$. See, e.g., \cite{Bar84}. Note that the Codazzi equation becomes, since $\mathbb{H}^{n+1}$  has constant sectional curvature,
\begin{equation}\label{codazzi}
a_{ij, k} = a_{ik,j}.
\end{equation}

\begin{proposition} For radial graphs moving by MMCF,
$$\left(\frac{\partial}{\partial t} - \Delta\right)\langle \vnu_H,x\rangle_H = (|A|^2 - n)\langle \vnu_H,x\rangle_H,$$
where $|A|^2 = g^{ij}g^{kl}a_{ik}a_{jl}$ is the norm squared of the second fundamental form on $\Sigma_t$. 
\end{proposition}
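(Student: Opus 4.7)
The key geometric observation driving the proof is that the Euclidean position vector $x$, viewed as a vector field on the upper half-space model of $\mathbb{H}^{n+1}$, is a Killing vector field --- it generates the isometries $x \mapsto \lambda x$ of $\mathbb{H}^{n+1}$. Indeed, applying Proposition \ref{prop2.1} with $Y = x$ (using $\nabla^E_X x = X$ and $\langle x, \ve\rangle_E = x_{n+1}$) yields
$$\nabla^H_X x \,=\, x_{n+1}\bigl(\langle X, x\rangle_H\,\ve \,-\, \langle X, \ve\rangle_H\,x\bigr),$$
and a one-line check confirms that $\langle \nabla^H_X x, Y\rangle_H + \langle \nabla^H_Y x, X\rangle_H = 0$. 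With this in hand, the time derivative is handled via the Leibniz rule for the ambient covariant derivative:
$$\tfrac{\partial}{\partial t}\langle \vnu_H, x\rangle_H \,=\, \langle \nabla^H_{\partial_t}\vnu_H, x\rangle_H + \langle \vnu_H, \nabla^H_{\partial_t}x\rangle_H.$$
The standard identity $\nabla^H_{\partial_t}\vnu_H = -\nabla H$ (valid for any flow of the form $\partial_t F = f\vnu_H$, and here $f = H - \sigma$ with $\sigma$ constant) handles the first summand, while $\nabla^H_{\partial_t}x = (H-\sigma)\nabla^H_{\vnu_H}x$ is perpendicular to $\vnu_H$ by the Killing antisymmetry. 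Hence $\partial_t \langle \vnu_H, x\rangle_H = -\langle \nabla H, x\rangle_H$, with \emph{no} leftover $(H-\sigma)$ term.

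For the Laplacian I would work in hyperbolic normal coordinates $\{\vv_i\}$ at $p \in \Sigma_t$ and introduce the abbreviations $u = \langle \vnu_H, x\rangle_H$, $\xi_j = \langle \vv_j, x\rangle_H$, $\eta_j = \langle \vv_j, \ve\rangle_H$, and $\eta_0 = \langle \vnu_H, \ve\rangle_H$. The Weingarten equation $\nabla^H_{\vv_i}\vnu_H = -a_i^j\vv_j$ together with the formula above for $\nabla^H_{\vv_i}x$ gives
$$\nabla_i u \,=\, -a_i^j\xi_j + x_{n+1}(\xi_i\eta_0 - \eta_i u).$$
Differentiating once more at $p$ produces several terms whose evaluation rests on three ingredients: (a) Codazzi in the traced form $\sum_i \nabla_i a_{ij} = \nabla_j H$; (b) the auxiliary formulas $\vv_i\xi_j = a_{ij}u + x_{n+1}(\xi_i\eta_j - \eta_i\xi_j)$, $\vv_i\eta_j = a_{ij}\eta_0 - x_{n+1}^{-1}\delta_{ij}$, and $\vv_i\eta_0 = -a_i^j\eta_j$, which all come from a second application of Proposition \ref{prop2.1} together with $\nabla^H_{\vv_i}\ve = -x_{n+1}^{-1}\vv_i$; and (c) the algebraic identities $\sum_i\eta_i\xi_i = x_{n+1}^{-1} - \eta_0 u$ and $\sum_i\eta_i^2 = x_{n+1}^{-2} - \eta_0^2$, obtained by decomposing $\ve$ and $x$ into tangential and normal parts along $\Sigma_t$ and using $\langle \ve, x\rangle_H = x_{n+1}^{-1}$ and $|\ve|_H^2 = x_{n+1}^{-2}$. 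With these in place, all cross-terms of type $\sum_{i,j} a_{ij}(\xi_i\eta_j - \eta_i\xi_j)$ vanish on contraction against the symmetric $a_{ij}$, and the remaining expressions collapse to
$$\Delta u \,=\, -\langle \nabla H, x\rangle_H \,-\, |A|^2 u \,+\, nu.$$
Subtracting this from $\partial_t u$ produces the stated identity $(\partial_t - \Delta)u = (|A|^2 - n)u$ at once.

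The main obstacle is the sheer volume of bookkeeping in the Laplacian step, where many terms involving $\ve$, $x_{n+1}^{-1}$, $H$, and the scalars $\xi_i,\eta_i$ must be shown to cancel. Two mechanisms drive these cancellations: the Killing antisymmetry of $\nabla^H x$ (which kills all $\sum_{i,j}a_{ij}(\xi_i\eta_j - \eta_i\xi_j)$-type expressions against the symmetric $a_{ij}$), and the precise coefficient in $\nabla^H_{\vv_i}\ve = -x_{n+1}^{-1}\vv_i$, whose trace $-n\,x_{n+1}^{-1}$ ultimately produces the ambient-curvature term $nu$ consistent with $\mathrm{Ric}^H = -n\,g$. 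It is also crucial to treat $\partial_t$ acting on $\vnu_H$ and on $x$ as the \emph{ambient} covariant derivative rather than a componentwise partial, since otherwise the Leibniz rule in the time-derivative step is not licit and one produces a spurious $(H-\sigma)$ term.
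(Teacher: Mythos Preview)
Your argument is correct, and the time-derivative step matches the paper's almost verbatim (both rely on $\nabla^H_{\partial_t}\vnu_H=-\nabla H$ and on the Killing antisymmetry of $\nabla^H x$ to kill the $\langle\vnu_H,\nabla^H_{\vnu_H}x\rangle_H$ term). The Laplacian computation, however, is carried out by a genuinely different route. The paper extends the frame $\{\vv_i\}$ along the integral curves of $x$ so that $[\vv_i,x]=0$, which lets it write $\nabla^H_{\vv_i}\nabla^H_{\vv_i}x = (R^H)(x,\vv_i)\vv_i + \nabla^H_x\nabla^H_{\vv_i}\vv_i$ and read off the $n$-term directly from $\mathrm{Ric}^H(\vnu_H,\vnu_H)=-n$; one residual term $x(H)$ then has to be argued away separately using that $x$ is Killing. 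Your approach instead stays on $\Sigma_t$ and feeds the explicit conformal formula for $\nabla^H_{\vv_i}x$ and $\nabla^H_{\vv_i}\ve$ from Proposition~\ref{prop2.1} through two differentiations, so the ambient-curvature contribution $nu$ emerges from the trace of $\nabla^H_{\vv_i}\ve=-x_{n+1}^{-1}\vv_i$ rather than from $R^H$, and no $x(H)$ term ever appears. The paper's method is shorter and more structural (Codazzi $+$ curvature $+$ Killing, with essentially no coordinate algebra), while yours is more self-contained---it avoids the Lie-bracket extension and the separate appeal to $x(H)=0$---at the cost of the bookkeeping you flag. Both are perfectly valid.
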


\begin{proof} We have, using $[\vv_i,x] = 0$, \eqref{Ric comp}, and Codazzi equation \eqref{codazzi}, and summing over repeated indices,
\begin{align*}
\Delta \langle \vnu_H,x\rangle_H = &\, \vv_i\vv_i \langle \vnu_H,x\rangle_H 
= \,\vv_i\langle \nabla^H_{\vv_i}\vnu_H, x\rangle_H 
+ \vv_i\langle \vnu_H,\nabla^H_{\vv_i} x\rangle_H \\
= &\, -\langle \nabla^H_{\vv_i}a_{ij}\vv_j, x\rangle_H -|A|^2\langle \vnu_H,x\rangle_H
- 2\langle a_{ij}\vv_j,  \nabla^H_{\vv_i}x\rangle_H \\
&+\langle \vnu_H, (R^H)(x,\vv_i)\vv_i\rangle_H + \langle\vnu_H, \nabla^H_x \nabla^H_{\vv_i}\vv_i\rangle_H \\
= & \, -\vv_j(H)
\langle \vv_j,x\rangle_H +\langle  (R^H)(x,\vv_i)\vv_i,\vnu_H\rangle_H -|A|^2\langle \vnu_H,x\rangle_H +a_{ij}xg^{ij} + xa_{ii} \\
= &  \,-\langle \nabla H, x\rangle_H -\Ric^H(\vnu_H,\vnu_H)\langle \vnu_H,x\rangle_H -|A|^2\langle \vnu_H,x\rangle_H + x(H)\\
= & \, (n-|A|^2)\langle \vnu_H,x\rangle_H -\langle \nabla  H, x\rangle_H  + x(H)\,.
\end{align*}
Notice $\nabla^H_{\frac{\partial}{\partial t} }\vnu_H$ is tangential, and $[ \frac{\partial}{\partial t},\vv_i] = 0$ from the  naturality of the Lie bracket. So, 
$$\langle \nabla^H_{\frac{\partial}{\partial t} }\vnu_H, \vv_i\rangle_H  = -\langle \vnu_H, \nabla^H_{\vv_i } \frac{\partial}{\partial t} \rangle_H = -\vv_i(H-\sigma) -(H-\sigma)\langle\vnu_H,\nabla^H_{\vv_i}\vnu_H\rangle_H =-\vv_iH,$$
which implies
$$\nabla^H_{\frac{\partial}{\partial t} }\vnu_H = -\nabla  H.$$
Also,
$$\langle \vnu_H,\nabla^H_{\vnu_H } x\rangle_H = \langle \vnu_E, \nabla^E_{\vnu_E} x + \frac{1}{x_{n+1}}(\langle \vnu_E,x\rangle_E \,\ve - \langle \vnu_E,\ve\rangle_E x-\langle x,\ve\rangle_E \vnu_E\rangle_E = 0$$
since $\nabla^E_{\vnu_E} x = \vnu_E$ and $\langle x,\ve\rangle_E = x_{n+1}$.
Hence,
\begin{align*}
\frac{\partial}{\partial t} \langle \vnu_H,x\rangle_H 
& = \,\langle \nabla^H_{\frac{\partial}{\partial t} }\vnu_H, x\rangle_H +(H- \sigma)\langle \vnu_H,\nabla^H_{\vnu_H } x\rangle_H \\
&= \,-\langle \nabla H, x\rangle_H.
\end{align*}
Finally, notice that $x(H) = 0$ since $x$ is a Killing vector field in $\mathbb{H}^{n+1}$, c.f. \cite[Appendix]{HLZ}.
\end{proof}

\begin{proposition} \label{prop1} For radial graphs moving by MMCF,
\begin{equation}\label{evosupp}
\left(\frac{\partial}{\partial t} - \Delta\right)\langle \vnu_E,x\rangle_E= (|A|^2 - \sigma\langle \vnu_E, \ve\rangle_E)\langle \vnu_E,x\rangle_E - 2\langle \nabla\langle \vnu_E,x\rangle_E,x_{n+1}\ve\rangle_H.
\end{equation}
\end{proposition}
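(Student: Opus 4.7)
The plan is to reduce to the preceding proposition via the conformal identity $\vnu_H = x_{n+1}\vnu_E$, which yields
$$\langle\vnu_E,x\rangle_E = x_{n+1}\langle\vnu_H,x\rangle_H.$$
Writing $u := \langle\vnu_E,x\rangle_E$ and $v := \langle\vnu_H,x\rangle_H$, so that $u = x_{n+1}\,v$, I would apply the standard product rule for the heat operator on $\Sigma_t$,
$$\left(\frac{\partial}{\partial t}-\Delta\right)(fg) = f\!\left(\frac{\partial}{\partial t}-\Delta\right)g + g\!\left(\frac{\partial}{\partial t}-\Delta\right)f - 2\langle\nabla f,\nabla g\rangle_H,$$
with $f=x_{n+1}$ and $g=v$. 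The first input is $(\partial_t-\Delta)v = (|A|^2-n)v$, which is exactly the preceding proposition. The second is $(\partial_t-\Delta)x_{n+1}$: plugging $f=x_{n+1}$ into Proposition \ref{evolution-1} (with $\Delta_E x_{n+1}=0$, $\nabla^E x_{n+1}=\ve$, and $\nabla^E_{\vnu_E}\nabla^E x_{n+1}=0$) gives
$$\left(\frac{\partial}{\partial t}-\Delta\right)x_{n+1} = x_{n+1}\bigl[(n-2) + 2(\vnu^{n+1})^2 - \sigma\vnu^{n+1}\bigr].$$
Assembling the product rule and collapsing the $\pm n$'s yields
$$\left(\frac{\partial}{\partial t}-\Delta\right)u = u\bigl[|A|^2 - 2 + 2(\vnu^{n+1})^2 - \sigma\vnu^{n+1}\bigr] - 2\langle\nabla x_{n+1},\nabla v\rangle_H.$$

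The remaining task is to match the bracketed "$-2+2(\vnu^{n+1})^2$" together with the cross-gradient term against the target $-2\langle\nabla u,\,x_{n+1}\ve\rangle_H$; equivalently, to verify
$$\langle\nabla u,\,x_{n+1}\ve\rangle_H = u\bigl[1-(\vnu^{n+1})^2\bigr] + \langle\nabla v,\nabla x_{n+1}\rangle_H.$$
For this I would record the tangential projection $\nabla x_{n+1} = x_{n+1}^2\ve - x_{n+1}\vnu^{n+1}\vnu_H$ (using $\nabla^H x_{n+1} = x_{n+1}^2\ve$ and $\langle\vnu_H,\ve\rangle_H = \vnu^{n+1}/x_{n+1}$), rewrite $x_{n+1}\ve = x_{n+1}^{-1}\nabla x_{n+1} + \vnu^{n+1}\vnu_H$, and expand $\nabla u = v\nabla x_{n+1}+x_{n+1}\nabla v$. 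The decomposition together with $\langle\nabla v,\vnu_H\rangle_H=0$ and the direct computation $\langle\nabla x_{n+1},x_{n+1}\ve\rangle_H = x_{n+1}(1-(\vnu^{n+1})^2)$ then produces the identity.

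The main obstacle is purely the Euclidean/hyperbolic bookkeeping: the vector $x_{n+1}\ve$ most naturally lives in the ambient space, while $\nabla u$ is the intrinsic gradient on $\Sigma_t$, so one must keep the conformal factor $x_{n+1}^{-2}$ visible throughout — for instance when computing $\langle\vnu_H,\ve\rangle_H$ or projecting $\ve$ onto $T\Sigma_t$. Once the tangential decomposition of $x_{n+1}\ve$ is in hand, no new geometric idea is needed; the result is essentially the preceding proposition conjugated by multiplication by $x_{n+1}$, with the term $-2\langle\nabla u,x_{n+1}\ve\rangle_H$ absorbing the defect $u\bigl(2(\vnu^{n+1})^2 - 2\bigr)$ produced by $(\partial_t-\Delta)x_{n+1}$.
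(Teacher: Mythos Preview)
Your proposal is correct and follows essentially the same route as the paper: write $\langle\vnu_E,x\rangle_E = x_{n+1}\langle\vnu_H,x\rangle_H$, apply the product rule for $\partial_t-\Delta$, feed in the preceding proposition and the formula $(\partial_t-\Delta)x_{n+1}=x_{n+1}\bigl[(n-2)+2(\vnu^{n+1})^2-\sigma\vnu^{n+1}\bigr]$ from Proposition~\ref{evolution-1}, and then use the tangential decomposition of $x_{n+1}\ve$ together with $|\nabla x_{n+1}|_H^2=x_{n+1}^2(1-(\vnu^{n+1})^2)$ to absorb the leftover $-2+2(\vnu^{n+1})^2$ into the gradient term. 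The only cosmetic difference is that the paper expands $\nabla\langle\vnu_H,x\rangle_H$ in terms of $\nabla\langle\vnu_E,x\rangle_E$ and $\nabla x_{n+1}^{-1}$, whereas you go the other way; the identity you isolate is exactly the one the paper uses. (One minor caution: your local use of $v$ for $\langle\vnu_H,x\rangle_H$ clashes with the paper's $v$ for the radial height, so rename it when writing up.)
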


\begin{remark}\label{obstruction}
In the case of MCF, i.e., $\sigma=0$, equation \eqref{evosupp} and the maximum principle yield immediately a global gradient bound for the approximate MCF (starting from the compact hypersurface $\Sigma_0^\epsilon$), which ensures the global existence of the approximate MCF, see \cite{U03}. On the other hand, in the case $\sigma \neq 0$, the maximum principle is not applicable directly, but thanks to the existence result from \cite{LX12} for the approximate MMCF we are able to get around with this, see Section \ref{proofofmain}.
\end{remark}
\begin{proof} We have, using $\nabla x_{n+1} = \nabla^Hx_{n+1} -\langle \nabla^H x_{n+1},\vnu_H\rangle_H\vnu_H = x_{n+1}^2(\ve - \langle \vnu_E,\ve\rangle_E \vnu_E)$, that 
$$|\nabla x_{n+1}|_H^2 = x_{n+1}^2(1-\langle \vnu_E,\ve\rangle_E^2).$$
Hence, using Proposition \ref{evolution-1}, we have
\begin{align*}
\left(\frac{\partial}{\partial t} -\Delta\right)\langle \vnu_E,x\rangle_E  = &\, \left(\frac{\partial}{\partial t} -\Delta\right)\left(x_{n+1}\langle \vnu_H,x\rangle_H\right) \\
= &\, x_{n+1}\left(\frac{\partial}{\partial t} -\Delta\right)\langle \vnu_H,x\rangle_H +\langle \vnu_H,x\rangle_H \left(\frac{\partial}{\partial t} -\Delta\right) x_{n+1}\\
& - 2\langle \nabla x_{n+1},\nabla\langle\vnu_H,x\rangle_H\rangle_H \\
=&\, (|A|^2-n)\langle \vnu_E,x\rangle_E + \langle\vnu_E,x\rangle_E(n-2 + 2\langle \vnu_E,\ve\rangle_E^2 - \sigma\langle \vnu_E,\ve\rangle_E)\\
& -2\left\langle \nabla x_{n+1}, \frac{1}{x_{n+1}}\nabla \langle\vnu_E,x\rangle_E\right\rangle_H 
-2\left\langle \nabla  x_{n+1},\langle\vnu_E,x\rangle_E\nabla  \frac{1}{x_{n+1}}\right\rangle_H  \\
= &\, (|A|^2 -2 + 2\langle \vnu_E,\ve\rangle_E^2 - \sigma\langle \vnu_E,\ve\rangle_E)\langle \vnu_E,x\rangle_E \\
& -2\left\langle x_{n+1}\ve, \nabla \langle\vnu_E,x\rangle_E\right\rangle_H +2\langle\vnu_E,x\rangle_E(1-\langle \vnu_E,\ve\rangle_E^2)  \\
=& \, (|A|^2 - \sigma\langle \vnu_E,\ve\rangle_E)\langle \vnu_E,x\rangle_E - 2\langle \nabla \langle \vnu_E,x\rangle_E,x_{n+1}\ve\rangle_H. \qedhere
\end{align*}
\end{proof}

Now, in order to obtain the interior estimate using maximum principle techniques, we multiply $\langle \vnu_E,x\rangle_E^{-1}$ by the space-time cut-off function and let
\begin{equation}
\xi = \eta^3\langle \vnu_E,x\rangle_E^{-1} = \left(\cosh R - e^{(n+\sigma)t}\left(\cosh r + \frac{\sigma}{n+\sigma}\right)\right)^3\langle \vnu_E,x\rangle_E^{-1} .
\end{equation}

\begin{proposition}
For radial graphs moving by MMCF with $\sigma \in  [0,n)$,
$$\left(\frac{\partial}{\partial t} - \Delta \right)\xi \le (n+2)\xi.$$
\end{proposition}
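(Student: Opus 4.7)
The plan is to differentiate $\xi = \eta^3 \cdot s^{-1}$ (writing $s := \langle \vnu_E, x\rangle_E$ for brevity) using the parabolic product and chain rules, then substitute $(\partial/\partial t - \Delta)\eta \le 0$ and the evolution equation \eqref{evosupp} for $s$, and finally absorb the resulting quadratic expression in $\nabla s$, $\nabla \eta$, and $x_{n+1}\ve$ by two carefully tuned Young's inequalities. Using the identities $(\partial_t - \Delta)\eta^3 = 3\eta^2(\partial_t - \Delta)\eta - 6\eta|\nabla \eta|^2$, $(\partial_t - \Delta)s^{-1} = -s^{-2}(\partial_t - \Delta)s - 2s^{-3}|\nabla s|^2$, and the product rule $(\partial_t - \Delta)(fg) = f(\partial_t - \Delta)g + g(\partial_t - \Delta)f - 2\langle \nabla f, \nabla g\rangle_H$, I obtain
\begin{align*}
\left(\frac{\partial}{\partial t} - \Delta\right)\xi = {} & 3\eta^2 s^{-1}\left(\frac{\partial}{\partial t} - \Delta\right)\eta - 6\eta s^{-1}|\nabla \eta|^2 - \eta^3 s^{-2}\left(\frac{\partial}{\partial t} - \Delta\right)s \\
& - 2\eta^3 s^{-3}|\nabla s|^2 + 6\eta^2 s^{-2}\langle \nabla \eta, \nabla s\rangle_H.
\end{align*}
In the region of interest $\eta \ge 0$ (so $\xi \ge 0$, since $s > 0$ by the radial graph property \eqref{RadialG}), the first term is $\le 0$ and is discarded. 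Substituting \eqref{evosupp} replaces the third term by $-|A|^2\xi + \sigma\langle \vnu_E, \ve\rangle_E\,\xi + 2\eta^3 s^{-2}\langle \nabla s, x_{n+1}\ve\rangle_H$, and the $-|A|^2\xi$ contribution, being $\le 0$, is also dropped.

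It remains to show that the four surviving gradient terms
\begin{align*}
Q := -2\eta^3 s^{-3}|\nabla s|^2 - 6\eta s^{-1}|\nabla \eta|^2 + 6\eta^2 s^{-2}\langle \nabla \eta, \nabla s\rangle_H + 2\eta^3 s^{-2}\langle \nabla s, x_{n+1}\ve\rangle_H
\end{align*}
satisfy $Q \le 2\xi$. Using $|x_{n+1}\ve|_H = 1$, I apply Young's inequality $2AB \le \alpha A^2 + \alpha^{-1} B^2$ with $\alpha = \tfrac12$ twice: to the $x_{n+1}\ve$ term this gives $2\eta^3 s^{-2}|\nabla s| \le \tfrac12 \eta^3 s^{-3}|\nabla s|^2 + 2\xi$, and to the mixed cross term $6\eta^2 s^{-2}\langle \nabla \eta, \nabla s\rangle_H$ it gives $6\eta^2 s^{-2}|\nabla \eta||\nabla s| \le \tfrac32 \eta^3 s^{-3}|\nabla s|^2 + 6\eta s^{-1}|\nabla \eta|^2$. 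Adding these to $Q$, the $|\nabla s|^2$ coefficient becomes $-2 + \tfrac12 + \tfrac32 = 0$ and the $|\nabla \eta|^2$ coefficient becomes $-6 + 6 = 0$, leaving $Q \le 2\xi$. Combining with $\sigma \langle \vnu_E, \ve\rangle_E \le \sigma < n$ yields $(\partial_t - \Delta)\xi \le (\sigma + 2)\xi \le (n+2)\xi$.

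The delicate point is the choice of Young's parameter: any generic split of the reservoir $-2\eta^3 s^{-3}|\nabla s|^2$ between the two cross terms leaves a residual $|\nabla s|^2$ or $|\nabla \eta|^2$ contribution with the wrong sign that cannot be absorbed into $\xi$. The value $\alpha = \tfrac12$ works precisely because the cubic exponent in $\eta^3$ makes the three gradient coefficients $(2,6,-6)$ inside $Q$ mutually compatible, so a single parameter choice zeros out both residues simultaneously; this is the quantitative reason the cut-off $\eta$ must be cubed in the definition of $\xi$.
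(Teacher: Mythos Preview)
Your proof is correct and essentially identical to the paper's: the same product/chain-rule expansion, the same use of $(\partial_t-\Delta)\eta\le 0$ and \eqref{evosupp}, and the very same two Young's inequalities with the same constants $\tfrac12$ and $\tfrac32$ that exactly cancel the $|\nabla s|^2$ and $|\nabla\eta|^2$ reservoirs. Your closing remark on why the exponent $3$ in $\eta^3$ is forced is a nice piece of commentary not spelled out in the paper.
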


\begin{proof}  This is a straight-forward calculation.
\begin{align*}
\left(\frac{\partial}{\partial t} -\Delta \right)\xi  
=&  \, \langle\vnu_E,x\rangle_E^{-1}\left(\frac{\partial}{\partial t} -\Delta \right)\eta^3 +\eta^3\left(\frac{\partial}{\partial t} -\Delta \right)\langle\vnu_E,x\rangle_E^{-1} -2\left\langle \nabla \eta^3,\nabla \langle\vnu_E,x\rangle_E^{-1}\right\rangle_H \\
=& \, 3\eta^2\langle\vnu_E,x\rangle_E^{-1}\left(\frac{\partial}{\partial t} -\Delta \right)\eta - 6\eta\langle\vnu_E,x\rangle_E^{-1}|\nabla \eta|_H^2
-\eta^3\langle \vnu_E,x\rangle_E^{-2}\left(\frac{\partial}{\partial t} -\Delta \right)\langle\vnu_E,x\rangle_E\\
 &- 2\eta^3\langle\vnu_E,x\rangle_E^{-3}|\nabla \langle \vnu_E,x\rangle_E|^2_H
+6\eta^2\langle\vnu_E,x\rangle_E^{-2}\left\langle \nabla \eta,\nabla \langle\vnu_E,x\rangle_E\right\rangle_H \\
\leq& \, -\eta^3\langle \vnu_E,x\rangle_E^{-2}\left((|A|^2 - \sigma\langle \vnu_E, \ve\rangle_E)\langle \vnu_E,x\rangle_E  - 2\langle \nabla \langle \vnu_E,x\rangle_E,x_{n+1}\ve\rangle_H\right)\\
&- \frac{1}{2}\eta^3\langle\vnu_E,x\rangle_E^{-3}|\nabla \langle \vnu_E,x\rangle_E|^2_H\\
\leq& \,\eta^3\langle\vnu_E,x\rangle_E^{-1}\left(\langle\vnu_E,\ve\rangle_E\sigma-|A|^2 + 2\right) \,\leq\, (n+2)\xi\,,
\end{align*}
where we have used
$$2\eta^3\langle \vnu_E,x\rangle_E^{-2}\langle\nabla \langle \vnu_E,x\rangle_E,x_{n+1}\ve\rangle_H \le \frac{1}{2}\eta^3\langle \vnu_E,x\rangle_E^{-3}|\nabla \langle\vnu_E,x\rangle_E|_H^2 + 2\eta^3\langle\vnu_E,x\rangle_E^{-1}\,,$$
and 
$$6\eta^2\langle\vnu_E,x\rangle_E^{-2}\left\langle \nabla \eta,\nabla \langle\vnu_E,x\rangle_E\right\rangle_H \le 6\eta\langle\vnu_E,x\rangle_E^{-1}|\nabla \eta|_H^2 + \frac{3}{2} \eta^3\langle\vnu_E,x\rangle_E^{-3}|\nabla \langle \vnu_E,x\rangle_E|^2_H\,,$$
from Young's inequality.
\end{proof}

The following theorem is the main technical interior gradient estimate.
\begin{theorem} \label{intgrad}
For any $R \geq \cosh^{-1}\left(\frac{\sigma}{n+\sigma}e^{(n+\sigma)T}\right)$ and $\theta \in \left(\frac{\sigma}{(n+\sigma)\cosh R} e^{(n+\sigma)T},1\right)$ such that $\{x \in \Sigma_t \mid r \le R\}$ is a compact radial graph for all $t\in [0,T]$, we have
\begin{equation*}
\sup_{\{x \in \Sigma_t \mid e^{(n+\sigma)t}(\cosh r + \frac{\sigma}{n+\sigma})\le  \theta\cosh R\}}\langle \vnu_E,\mathbf{z}\rangle_E^{-1} \le e^{(n+2)T + v_{\text{osc}}}(1-\theta)^{-3}\sup_{\{x\in \Sigma_0\mid r \le R\}}\langle \vnu_E,\mathbf{z}\rangle_E^{-1} \,,
\end{equation*}
where $v_{\text{osc}} = \max_{\{x\in \Sigma_t | r \le R\}\times [0,T]} v - \min_{\{x\in \Sigma_t | r \le R\}\times [0,T]}  v$ is the oscillation of the radial height of $x$ (see \eqref{radialheight}) in $\{x\in \Sigma_t\mid r \le R\}\times [0,T]$.
\end{theorem}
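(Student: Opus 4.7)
The plan is to apply the weak parabolic maximum principle to the quantity $\xi$ constructed above, after absorbing the $(n+2)$ coefficient from the preceding proposition into an exponential factor. Concretely, set $\tilde\xi(x,t) := e^{-(n+2)t}\,\xi(x,t)$; then the inequality $(\partial_t-\Delta)\xi\le (n+2)\xi$ immediately gives
\[
\left(\frac{\partial}{\partial t}-\Delta\right)\tilde\xi \;\le\; 0,
\]
so $\tilde\xi$ is a subsolution of the heat equation on the evolving hypersurfaces $\Sigma_t$.

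Next, I would localize to the spacetime region
\[
\mathcal{D} := \{(x,t)\in\Sigma_t\times[0,T] : \eta(x,t)>0\}.
\]
The hypothesis that $\{x\in\Sigma_t : r\le R\}$ is a compact radial graph for every $t\in[0,T]$ guarantees that $\mathcal{D}$ is relatively compact, with parabolic boundary splitting into an initial part $\{t=0\}\cap\{\eta(\cdot,0)\ge 0\}$ and a lateral part $\{\eta=0\}$; on the latter $\xi\equiv 0$ by construction because of the $\eta^3$ factor. Hence the weak parabolic maximum principle yields
\[
\sup_{\mathcal{D}}\tilde\xi \;\le\; \sup_{\Sigma_0\cap\{\eta(\cdot,0)\ge 0\}}\xi(\cdot,0) \;\le\; (\cosh R)^3\sup_{\Sigma_0\cap\{r\le R\}}\langle\vnu_E,x\rangle_E^{-1},
\]
where the last step uses $\eta(x,0)\le\cosh R$. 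Restricting now to the inner region $\{e^{(n+\sigma)t}(\cosh r+\sigma/(n+\sigma))\le\theta\cosh R\}$, where by construction $\eta(x,t)\ge(1-\theta)\cosh R$, and dividing through gives
\[
\langle\vnu_E,x\rangle_E^{-1}(x,t) \;\le\; \frac{e^{(n+2)T}}{(1-\theta)^3}\sup_{\Sigma_0\cap\{r\le R\}}\langle\vnu_E,x\rangle_E^{-1}.
\]

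To conclude, I would convert from $\langle\vnu_E,x\rangle_E^{-1}$ to the statement's quantity using $\langle\vnu_E,\mathbf{z}\rangle_E^{-1}=|x|_E\,\langle\vnu_E,x\rangle_E^{-1}$. Since $|x|_E=e^v$, the ratio of $|x|_E$ at a point of $\Sigma_t\cap\{r\le R\}$ to $|x|_E$ at any point of $\Sigma_0\cap\{r\le R\}$ is at most $e^{v_{\text{osc}}}$ by definition of the oscillation, supplying exactly the missing factor in the stated inequality. The main obstacle is confirming that the weak maximum principle actually applies here: although $\Sigma_t$ itself is complete and a priori non-compact, the truncation $\{r\le R\}$ is compact by hypothesis, and the crucial fact that $\xi$ vanishes along the lateral barrier $\{\eta=0\}$ localizes the argument and sidesteps any behavior at the asymptotic boundary. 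A secondary subtlety is that the differential inequality for $\eta$ (and hence for $\xi$) was derived under $\sigma\ge 0$; the $\sigma<0$ regime will have to be recovered via the hyperbolic inversion $x\mapsto x/|x|_E^2$ indicated in the preceding remark.
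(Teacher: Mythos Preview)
Your proposal is correct and follows essentially the same route as the paper. The only cosmetic difference is that the paper invokes Hamilton's trick to obtain $\frac{d}{dt}\sup_{\{r\le R\}}\xi\le(n+2)\sup_{\{r\le R\}}\xi$ and then integrates, whereas you equivalently absorb the exponential and apply the weak maximum principle to $e^{-(n+2)t}\xi$; the subsequent bounds $\eta\le\cosh R$ at $t=0$, $\eta\ge(1-\theta)\cosh R$ on the inner region, and the conversion via $|x|_E=e^{v}$ are identical.
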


\begin{proof} The previous proposition and Hamilton's trick imply, for almost all $t\in (0,T)$,
$$\frac{d}{dt}\sup_{\{x\in \Sigma_t\mid r \le R\}} \xi \le (n+2)\sup_{\{x\in \Sigma_t\mid r \le R\}}\xi,$$
so we may integrate from $0$ to $T$ to obtain
$$\sup_{\{x\in \Sigma_T\mid r \le R\}} \eta^3\langle \vnu_E,x\rangle_E^{-1} \le e^{(n+2)T}\sup_{\{x\in \Sigma_0\mid r \le R\}}\eta^3\langle \vnu_E,x\rangle_E^{-1}.$$
Now notice $e^{v_{\text{min}}} \le |x|_E$ implies
$$e^{(n+2)T-v_{\text{min}}}\sup_{\{x\in \Sigma_0\mid r \le R\}}\eta^3\langle \vnu_E,\mathbf{z}\rangle_E^{-1} \ge e^{(n+2)T}\sup_{\{x\in \Sigma_0\mid r \le R\}}\eta^3\langle \vnu_E,x\rangle_E^{-1}.$$
Similarly, $e^{v_{\text{max}}} \ge |x|_E$ implies
$$e^{-v_{\text{max}}}\sup_{\{x\in \Sigma_T\mid r \le R\}}\eta^3\langle \vnu_E,\mathbf{z}\rangle_E^{-1} \le \sup_{\{x\in \Sigma_T\mid r \le R\}}\eta^3\langle \vnu_E,x\rangle_E^{-1}.$$
These two inequalities imply then
$$\sup_{\{x\in \Sigma_T\mid r \le R\}}\eta^3\langle \vnu_E,\mathbf{z}\rangle_E^{-1} \le e^{(n+2)T+v_{\text{max}}-v_{\text{min}}}\sup_{\{x\in \Sigma_0\mid r \le R\}}\eta^3\langle \vnu_E,\mathbf{z}\rangle_E^{-1}.$$
We also have
$$\sup_{\{x \in \Sigma_T \mid e^{(n+\sigma)t}(\cosh r + \frac{\sigma}{n+\sigma})\le  \theta\cosh R\}}\eta^3\langle \vnu_E,\mathbf{z}\rangle_E^{-1}\le \sup_{\{x\in \Sigma_T\mid r \le R\}}\eta^3\langle \vnu_E,\mathbf{z}\rangle_E^{-1},$$
and $\eta^3 \ge (1-\theta)^3\cosh^3R$ in ${\{x \in \Sigma_t \mid e^{(n+\sigma)t}(\cosh r + \frac{\sigma}{n+\sigma})\le  \theta\cosh R\}}$ since $\theta\cosh R + \eta \ge \cosh R$ there. We also have $\eta^3 \le \cosh^3 R$ everywhere. These facts, along with replacing $T$ with any $t \in [0,T)$, imply the result.
\end{proof}

\section{Interior estimates on higher order derivatives}\label{higherderiv}
\subsection{Estimates on the second derivatives}
Now let $u = \langle \vnu_E, x\rangle_E^{-1}$ and define 
$$\VP = \VP(u^2) = \frac{u^2}{1-ku^2}$$
where
$$k = \left(2\sup_{t\in [0,T]}\sup_{\{x\in \Sigma_t | r \le R\} } u^2\right)^{-1}\,.$$
Let $\VP^\prime$ denote differentiation of $\VP$ with respect to $u^2$. From Remark \ref{remark1},  we know that 
$$c_0 \le |x|_E^{-2} \le \VP$$
for some constant $c_0$ depending on $\Sigma_0$.

Combining Proposition \ref{prop1} with (iii) of Lemma \ref{evolution-ofA}, we obtain:
\begin{lemma}
On $\{x\in \Sigma_t | r \le R\}$ and $\Sigma_t$ moves by MMCF, we have
\begin{align*}
\left(\frac{\partial}{\partial t} -\Delta \right)  \left(|A|^2\VP\right)
\le &-k|A|^4\VP^2 
+ \left(\frac{c(n,c_0)}{k} 
- k\VP^\prime|\nabla v|^2\right)|A|^2\VP \\
& -\VP^{-1}\langle \nabla \VP,\nabla (|A|^2\VP)\rangle_H +\sigma^2 \VP\,.
\end{align*}
\end{lemma}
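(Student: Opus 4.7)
I apply the product rule
\[
\left(\tfrac{\partial}{\partial t}-\Delta\right)(|A|^{2}\VP) = \VP\bigl(\tfrac{\partial}{\partial t}-\Delta\bigr)|A|^{2} + |A|^{2}\bigl(\tfrac{\partial}{\partial t}-\Delta\bigr)\VP - 2\langle \nabla|A|^{2},\nabla\VP\rangle_H
\]
and substitute. For the first factor I use Lemma \ref{evolution-ofA}(iii). For the second, I first compute $(\partial_t-\Delta)u$ by applying the identity $(\partial_t-\Delta)f^{-1} = -f^{-2}(\partial_t-\Delta)f - 2f^{-3}|\nabla f|^{2}$ to Proposition \ref{prop1}, and then use the chain rule
\[
(\partial_t-\Delta)\VP(u^{2}) = \VP'(u^{2})\bigl[2u(\partial_t-\Delta)u-2|\nabla u|^{2}\bigr] - 4u^{2}\VP''(u^{2})|\nabla u|^{2}.
\]
Two algebraic identities specific to the choice $\VP(s)=s/(1-ks)$ organize the whole calculation: $u^{2}\VP'(u^{2}) = \VP + k\VP^{2}$ and $4u^{2}\VP''(u^{2}) = 8k\VP\VP'(u^{2})$.

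\textbf{The principal cancellation} is at order $|A|^{4}\VP$. The $+2|A|^{4}\VP$ supplied by Lemma \ref{evolution-ofA}(iii) combines with the $-u|A|^{2}$ piece of $(\partial_t-\Delta)u$, which after multiplication by $2u\VP'$ in the chain-rule formula and then by $|A|^{2}$ in the product rule produces $-2u^{2}\VP'|A|^{4} = -(2\VP+2k\VP^{2})|A|^{4}$, to leave exactly $-2k|A|^{4}\VP^{2}$. Half is kept as the stated principal term $-k|A|^{4}\VP^{2}$; the other half absorbs the cubic-in-$A$ garbage $-2\sigma\text{Tr}(A^{3})\VP$ via Young's, say $2\sigma|A|^{3}\VP \le k|A|^{4}\VP^{2} + c\sigma^{2}|A|^{2}/k$. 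The resulting $\sigma^{2}|A|^{2}/k$, the $2n|A|^{2}\VP$ from Lemma \ref{evolution-ofA}(iii), the lower-order $2\sigma\langle\vnu_E,\ve\rangle_E(\VP+k\VP^{2})|A|^{2}$ from $|A|^{2}(\partial_t-\Delta)\VP$, and the transport contribution $-4u\VP'|A|^{2}\langle\nabla u,x_{n+1}\ve\rangle_H$ (handled by a further Young splitting that deposits an $\epsilon$-fraction into the gradient reservoir below) are all folded into $\frac{c(n,c_{0})}{k}|A|^{2}\VP$ using Remark \ref{remark1} and $k\VP\le 1$. The piece $2\sigma H\VP \le \sigma^{2}\VP + H^{2}\VP$ is dominated by the manifest $-4H^{2}\VP$.

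\textbf{Cross term and gradient reservoir.} I rewrite
\[
-2\langle \nabla|A|^{2},\nabla\VP\rangle_H = -\VP^{-1}\langle\nabla\VP,\nabla(|A|^{2}\VP)\rangle_H - \langle\nabla|A|^{2},\nabla\VP\rangle_H + |A|^{2}\VP^{-1}|\nabla\VP|^{2},
\]
keeping the first summand verbatim as the divergence-form piece required by the lemma. Cauchy--Schwarz plus Young's with weight $\tfrac{1}{2}$ give $|\langle\nabla|A|^{2},\nabla\VP\rangle_H| \le 2|A||\nabla A||\nabla\VP| \le 2|\nabla A|^{2}\VP + \tfrac{1}{2}|A|^{2}\VP^{-1}|\nabla\VP|^{2}$, so the $2|\nabla A|^{2}\VP$ is precisely absorbed by the $-2|\nabla A|^{2}\VP$ from Lemma \ref{evolution-ofA}(iii). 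Using $|\nabla\VP|^{2} = 4u^{2}(\VP')^{2}|\nabla u|^{2}$ and $u^{2}(\VP')^{2}/\VP = \VP'(1+k\VP)$, all remaining $|A|^{2}|\nabla u|^{2}$ contributions collect into $[-6\VP' - 8k\VP\VP' + 6\VP'(1+k\VP)]|A|^{2}|\nabla u|^{2} = -2k\VP\VP'|A|^{2}|\nabla u|^{2}$; the $\VP'$-only coefficients cancel exactly, leaving only this good negative term. Finally, on $\{r \le R\}$ Remark \ref{remark1} furnishes a $c_{0}$-dependent comparison between $|\nabla u|^{2}$ on $\Sigma_{t}$ and $|\nabla v|^{2}$ on $\mathbb{S}^{n}_{+}$ (via $u = we^{-v}$, $w = \sqrt{1+|\nabla v|^{2}}$, and the uniform bound on $|x|_E$), whence $-2k\VP\VP'|A|^{2}|\nabla u|^{2} \le -k\VP'|\nabla v|^{2}\cdot|A|^{2}\VP$ after the constant adjustment is absorbed into the $c(n,c_{0})/k$ reservoir.

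\textbf{Main obstacle.} The real difficulty is the coefficient bookkeeping. The Young's weight $\tfrac{1}{2}$ in the cross-term absorption is forced precisely so that the $\VP'$-only coefficients cancel to zero; any other weight leaves an uncontrolled positive $\VP'|A|^{2}|\nabla u|^{2}$ residue. Similarly, the algebraic coincidences $u^{2}\VP'=\VP+k\VP^{2}$ and $u^{2}\VP''=2k\VP\VP'$ (both tied to the specific form of $\VP(s)=s/(1-ks)$) are what permit both the principal $|A|^{4}\VP$ cancellation and the quadratic-in-$\VP$ reservoir; this is exactly why one introduces $\VP$ rather than working directly with $u^{2}$. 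The normalization $k = (2\sup u^{2})^{-1}$ ensures $k\VP \le 1$, keeping $\VP$ and $\VP'$ uniformly bounded above and below and making every absorption above legitimate.
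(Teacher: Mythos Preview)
Your argument is correct and follows essentially the same route as the paper's: product rule into three pieces, the two identities $u^{2}\VP'=\VP+k\VP^{2}$ and $u^{2}\VP''=2k\VP\VP'$, Young's inequalities with the same weights, Kato's inequality, and the same divergence-form rewriting of the cross term. Your final paragraph converting $|\nabla u|^{2}$ to $|\nabla v|^{2}$ is in fact unnecessary---the paper's own computation ends with $-k\VP'|\nabla u|^{2}$ (the $|\nabla v|$ in the displayed statement is evidently a typo), and downstream only the sign of this term is used.
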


\begin{proof} We have
\begin{align*}
&\left(\frac{\partial}{\partial t} -\Delta \right) \left(|A|^2\VP\right) \\
=& \, \VP\left(\frac{\partial}{\partial t} -\Delta \right)  |A|^2 
+ |A|^2\left(\frac{\partial}{\partial t} -\Delta \right)  \VP - 2\langle \nabla |A|^2,\nabla \VP\rangle_H \\
:=& \, \text{I} + \text{II} + \text{III}\,.
\end{align*}

By (iii) of Lemma \ref{evolution-ofA}, we have
\begin{align*}
\text{I} &=\, \VP \left(2|A|^4 + 2n|A|^2 -2|\nabla A|^2-4H^2+2\sigma(H - \text{Tr} (A^3) )\right)\\
&
\le \VP\left(2|A|^4 
+ 2n|A|^2 
- 2|\nabla  A|^2
 - 4H^2 
+ \sigma\left(H^2c_2 
+ \frac{1}{c_2} 
+ \frac{|A|^2}{c_1} 
+ c_1|A|^4\right)\right)
\\
&
 \le \VP(2 + c_1\sigma)|A|^4 
+\VP\left( 2n+\frac{\sigma}{c_1}\right)|A|^2
 - 2\VP|\nabla A|^2
 + \frac{\sigma}{c_2}\VP
\end{align*}
where we used Young's inequality and the fact that $|\text{Tr}(A^3)| \le |A|^3$. We also chose constants $c_1, c_2$ such that $c_1\sigma \le c_0 k$ and $c_2\sigma \le 4$, where $c_0 \le \VP$.

For the second term II, by Proposition \ref{prop1} we have
\begin{align*}
\left(\frac{\partial}{\partial t} -\Delta \right)\VP
 &
= -2\VP^\prime u^3\left(\frac{\partial}{\partial t} -\Delta \right)\langle \vnu_E,x\rangle_E
 - 6\VP^\prime|\nabla u|^2
- 4\VP^{\prime\prime}u^2|\nabla u|^2
\\
& 
= -2\VP^\prime u^2(|A|^2-\sigma\langle \vnu_E,\ve\rangle_E) 
- 4\VP^\prime u\langle \nabla u,x_{n+1} \ve\rangle_H
- (6 + 8k\VP)\VP^\prime|\nabla u|^2
\end{align*}
since  $\VP^{\prime\prime}u^2= 2k\VP\VP^\prime$.

Therefore, using Young's inequality again we get
\begin{align*}
\text{II} 
\le -2u^2\VP^\prime |A|^4 
&
- (6 + 8k\VP)\VP^\prime |A|^2 |\nabla u|^2
+k\VP\VP^\prime|A|^2|\nabla u|^2 
+ \frac{4}{c_0k}|A|^2\VP+4n|A|^2\VP\,,
\end{align*}
since $\sigma < n, \VP^\prime u^2 \le 2 \VP$ and $\frac{\VP}{c_0} \ge 1$.

For the third term III, we compute:
\begin{align*}
\text{III} 
&
=\, -\VP^{-1}\langle \nabla \VP,\nabla ( |A|^2\VP) \rangle_H 
+ \VP^{-1} |A|^2|\nabla \VP|^2 - 
\langle \nabla |A|^2,  \nabla \VP\rangle_H 
\\
&
 = \,-\VP^{-1}\langle \nabla \VP,\nabla (|A|^2\VP)\rangle_H 
+4 \VP^{-1}(\VP^\prime u)^2|A|^2|\nabla u|^2 - 
4\VP^\prime u |A| \langle \nabla |A|, \nabla  u \rangle_H 
\\
&
\leq  \,-\VP^{-1}\langle \nabla \VP,\nabla (|A|^2\VP)\rangle_H 
+ 6 \VP^{-1}(\VP^\prime u)^2|A|^2|\nabla u|^2 
+ 2|\nabla |A||^2\VP\, .
\end{align*}

From Kato's inequality, $|\nabla |A||^2 \le |\nabla  A|^2$, so that
\begin{align*}
\text{I} + \text{II} + \text{III} 
\le & \,(\VP(2 + c_1\sigma) -2u^2\VP^\prime  )|A|^4 + \left( 6n+\frac{\sigma}{c_1} + \frac{4}{c_0k}\right)|A|^2\VP + \frac{\sigma}{c_2}\VP
\\
&
 + ( 6 \VP^{-1}(\VP^\prime u)^2 - (6 + 7k\VP)\VP^\prime) |A|^2 |\nabla u|^2
-\VP^{-1}\langle \nabla \VP,\nabla ( |A|^2\VP)\rangle_H \,.
\end{align*}
Note that since $c_1\sigma \le c_0k$ and $\VP - u^2\VP^\prime = -k\VP^2$, we have $\VP(2 + c_1\sigma) -2u^2\VP^\prime  \le -k\VP^2$. Moreover,
$$
6 \VP^{-1}(\VP^\prime u)^2 - (6 + 7k\VP)\VP^\prime \,=\, -k \VP \VP^\prime\,.
$$
Now let $c_1 = \frac{c_0 k }{\sigma}$ and $c_2 = \frac{1}{\sigma}$, then  $6n+\frac{\sigma}{c_1} + \frac{4}{c_0k} \le \frac{c(n,c_0)}{k}$ and on $\{ x\in \Sigma_t | r \le R\} \cap \{ |A|^2 \ge 1\}$, we have
$$\text{I} + \text{II} + \text{III} 
 \le  -k|A|^4\VP^2 
+ \left(\frac{c(n,c_0)}{k} 
- k\VP^\prime|\nabla u|^2\right)|A|^2\VP 
 -\VP^{-1}\langle \nabla \VP,\nabla ( |A|^2\VP)\rangle_H + \sigma^2 \VP\,.$$
This proves the lemma.
\end{proof}

Now we are ready to show the interior estimates on the second fundamental form $|A|$ (i.e., $|\nabla^2 v|$). For simplicity, let $$g = |A|^2\VP\,.$$ 
Then the previous lemma says
\begin{align*}
\left(\frac{\partial}{\partial t} -\Delta \right) g
\le\, -kg^2 + \left(\frac{c(n,c_0)}{k} - k\VP^\prime|\nabla u|^2\right)g -\VP^{-1}\langle \nabla \VP,\nabla g\rangle_H +\sigma^2 \VP\, .
\end{align*}

Now let $$\eta =(\cosh R - \cosh r)^2$$ be the spacial cut-off function, and let $\eta^\prime$ denote the differentiation with respect to $\cosh r$. Then, from Proposition \ref{prop2}, we have
\begin{align*}
\left(\frac{\partial}{\partial t} - \Delta \right)(-\cosh r)= &\,-\left[\frac{1}{\cosh r}(1 - \langle \vnu_E, \mathbf{z}\rangle_E^2) - (n - \sigma\langle \vnu_E, \ve\rangle_E)\cosh r - \sigma\langle \vnu_E, \mathbf{z}\rangle_E\right]\\
\leq &\, (\sigma+n)\cosh r + \sigma\,.
\end{align*}
So that
\begin{align*}
\left(\frac{\partial}{\partial t} -\Delta \right)\eta = &\, 2 (\cosh R - \cosh r)\left( \frac{\partial}{\partial t} - \Delta \right)(-\cosh r) - 2 |\nabla \cosh r|^2\\
\le & \, 2(\sigma + n)\cosh^2 R + 2\sigma \cosh R  - 2 |\nabla \cosh r|^2
\\
\le &\, 2(2\sigma + n )\cosh^2 R- 2 |\nabla \cosh r|^2\,,
\end{align*}
if $\sigma \leq  \cosh R$, namely, $R$ is sufficiently large, e.g., $\cosh R \geq n$.

Therefore, we compute:
\begin{align}
\left(\frac{\partial}{\partial t} -\Delta \right) (g\eta) \leq &\, \left[-kg^2 + \left(\frac{c(n,c_0)}{k} - k\VP^\prime|\nabla u|^2\right)g -\VP^{-1}\langle \nabla \VP,\nabla g\rangle_H +\sigma^2 \VP\right]\eta \notag\\
&+ g \left(\frac{\partial}{\partial t} -\Delta \right)\eta - 2 \langle \nabla g, \nabla \eta\rangle \notag\\
\le &\,-kg^2\eta + \left(\frac{c(n,c_0)}{k} \right)g\eta -\VP^{-1}\langle \nabla \VP,\nabla (g\eta)\rangle_H +\frac{|\eta^\prime|^2 g}{k\eta u^2}|\nabla \cosh r|^2 \notag\\
&+ \sigma^2 \VP \eta+ g \left(\frac{\partial}{\partial t} -\Delta \right)\eta - 2\eta^{-1} \langle \nabla (g\eta), \nabla \eta\rangle + 2\eta^{-1}g|\nabla \eta|^2 \notag\\
\leq &\, -kg^2\eta + \left(\frac{c(n,c_0)}{k} \right)g\eta -\langle \VP^{-1}\nabla \VP + 2\eta^{-1}\nabla \eta,\nabla (g\eta)\rangle_H \notag\\
&+ \sigma^2 \VP \eta+ g \left(2(2\sigma + n )\cosh^2 R- 2 |\nabla \cosh r|^2\right) + g |\nabla \cosh r|^2\left( \frac{4}{ku^2} + 8\right) \notag\\
\leq &\,  -kg^2\eta + \left(\frac{c(n,c_0)}{k} \right)g\eta -\langle \VP^{-1}\nabla \VP + 2\eta^{-1}\nabla \eta,\nabla (g\eta)\rangle_H \\
&+ 30n g \left(1 +\frac{|x|_E^2}{k}\right) \cosh^2R + \sigma^2 \VP \eta \notag\,,
\end{align}
where we used Young's inequality and the facts that $\VP^{-1} \nabla \VP = 2\VP u^{-3} \nabla u$ and $\VP^\prime = \VP^2 u^{-4}$ and $\eta^{-1}|\nabla \eta|^2 = \eta^{-1}|\eta^\prime|^2 |\nabla \cosh r|^2 = 4|\nabla \cosh r|^2 \leq 4 (1+ \cosh r)^2 $\,. Therefore, we have
\begin{align}
\left(\frac{\partial}{\partial t} -\Delta \right) (g\eta t) \leq &\, -kg^2\eta t + \left(\frac{c(n,c_0)}{k} t + 1\right)g\eta -\langle \VP^{-1}\nabla \VP + 2\eta^{-1}\nabla \eta,\nabla (g\eta t)\rangle_H \notag\\
&+ 30n g \left(1 +\frac{1}{c_0k}\right) (\cosh^2R) t + \sigma^2 \VP \eta t\,.
\end{align}

Now at a point $(x_0,t_0)$ where $\sup_{[0,T]}\sup_{\{x\in \Sigma_t | r \le R\}} (g\eta t) \ne 0$ is attained for $t_0>0$, we have
\begin{align*}
kg^2\eta t_0
\le  \left(\frac{c(n,c_0)}{k}t_0 + 1\right)g\eta +30n g \left(1+ \frac{1}{c_0k}\right) (\cosh^2 R)t_0 + \sigma^2 \VP \eta t_0,
\end{align*}
which implies (dividing by $kg = k |A|^2 \VP$ on both sides) at $(x_0, t_0)$ we have
\begin{align*}
&g(x_0,t_0)\eta(x_0,t_0) t_0\\
\leq&\,  \frac{1}{k}\left(\frac{c(n,c_0)}{k}t_0 + 1\right)\cosh^2 R +\frac{30n}{k} \left(1+ \frac{1}{c_0k}\right) (\cosh^2 R)t_0 + \frac{\sigma^2}{k|A|^2} (\cosh^2R) t_0\\
 \leq&\, \frac{c(n,c_0)}{k^2}(1+T)\cosh^2R + \frac{30n}{k}\left(1+ T + \frac{\sigma^2 T}{|A|^2(x_0,t_0)}\right)\cosh^2R\,.
\end{align*}
Note that for any $(x,t) \in \{x\in \Sigma_t | \cosh r\leq \theta \cosh R\} \times [0,T]$ we have
$$
g(x,t)\eta(x,t) t \leq g(x_0,t_0)\eta(x_0,t_0) t_0 \quad\text{and}\quad \eta\geq (1-\theta)^2\cosh^2 R\,.
$$
If $|A|^2(x_0,t_0) \leq 1$, then 
\begin{align*}
c_0 |A|^2(x,T) &\leq \frac{1}{T} \eta^{-1}(x,T) \VP(x_0, t_0) \eta(x_0, t_0) t_0 \\
&\leq 4(1-\theta)^{-2}\sup_{t\in [0,T]}\sup_{\{ x\in \Sigma_t | r \le R\}} u^2\\
& \leq \frac{8}{c_0}(1-\theta)^{-2}\sup_{t\in [0,T]}\sup_{\{ x\in \Sigma_t | r \le R\}} u^4\,,
\end{align*}
where we used $c_0\leq \VP \leq 2 u^2$ and $\eta \leq 2 \cosh^2 R$\,. Otherwise, if $|A|^2(x_0,t_0) > 1$ then we have
\begin{align*}
c_0|A|^2(x,T) \leq g(x,T) &\leq \left[ \frac{c(n,c_0)}{k^2}\left(1+\frac{1}{T}\right)+ \frac{30n}{k}\left(1+ \frac{1}{T} + \sigma^2\right)\right](1-\theta)^{-2}\\
&\leq c(n,c_0)\left(1+\frac{1}{T}\right)(1-\theta)^{-2}\sup_{t\in [0,T]}\sup_{\{ x\in \Sigma_t | r \le R\}} u^4\,.
\end{align*}

Since $T>0$ was arbitrary, we have just proved
\begin{theorem}\label{estonA2}
For all $t \in [0,T]$, any $R\geq \cosh^{-1}(n)$ and any $\theta \in (0,1)$ we have
\begin{align*}
\sup_{\{ x\in \Sigma_t | \cosh r \le \theta \cosh R\}}|A|^2
&
\le   c(n,c_0)\left(1+\frac{1}{t}\right)(1-\theta)^{-2}\sup_{s\in [0,t]}\sup_{\{ x\in \Sigma_s | r \le R\}} u^4\,.\end{align*}
\end{theorem}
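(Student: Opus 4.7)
The plan is to run a maximum principle argument on the space-time quantity $G := g\eta t$, where $g := |A|^2 \VP(u^2)$, $u := \langle \vnu_E, x\rangle_E^{-1}$, $\VP(s) := s/(1-ks)$ with $k := \bigl(2\sup_{[0,T]\times\{r\le R\}} u^2\bigr)^{-1}$, and $\eta := (\cosh R - \cosh r)^2$ is a spatial cut-off. By the interior gradient estimate (Theorem \ref{intgrad}), $u^2$ is bounded on $\{r \le R\} \times [0,T]$, so $k > 0$ and $c_0 \le \VP \le 2u^2$; moreover $\VP$ is engineered so that $\VP - u^2 \VP' = -k\VP^2$, and this is what generates the coercive $-kg^2$ term needed to dominate the $|A|^4$ quantities produced by the evolution of $|A|^2$.

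First I would establish a differential inequality for $g$ by combining Lemma \ref{evolution-ofA}(iii) with $(\partial/\partial t - \Delta)\VP$, the latter computed from Proposition \ref{prop1} via the chain rule. After using Young's inequality on the cubic $\Tr(A^3)$ and $\sigma$-contributions (with splitting constants chosen so that $c_1 \sigma \le c_0 k$) and Kato's inequality $|\nabla |A||^2 \le |\nabla A|^2$ to cover the $|\nabla A|^2$-terms, the identity $\VP - u^2\VP' = -k\VP^2$ yields
\[
\left(\frac{\partial}{\partial t} - \Delta\right) g \le -k g^2 + \Bigl(\tfrac{c(n,c_0)}{k} - k\VP'|\nabla u|^2\Bigr) g - \VP^{-1}\langle \nabla \VP, \nabla g\rangle_H + \sigma^2 \VP.
\]

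Next I would multiply by $\eta t$ and control $(\partial/\partial t - \Delta)\eta$ using Proposition \ref{prop2}; under the hypothesis $\cosh R \ge n$, this gives $(\partial/\partial t - \Delta)\eta \le 2(2\sigma + n)\cosh^2 R - 2|\nabla \cosh r|^2$. A further application of Young's inequality absorbs the cross-term $\langle \nabla g, \nabla \eta\rangle_H$ and the residual $|\nabla \cosh r|^2$-contributions against the favorable $-2|\nabla \cosh r|^2$ and the $-k\VP^2$ reservoirs, producing a parabolic inequality of the schematic form
\[
\left(\frac{\partial}{\partial t} - \Delta\right) G \le -k g^2 \eta t + \tfrac{c(n,c_0)}{k}(1 + t)\, g\, \cosh^2 R + \sigma^2\VP\eta t - \langle \Theta, \nabla G\rangle_H
\]
for some vector field $\Theta$ built from $\VP^{-1}\nabla\VP$ and $\eta^{-1}\nabla\eta$.

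Finally, evaluating at a space-time maximum $(x_0, t_0)$ of $G$ on $\{r \le R\} \times [0,T]$ with $t_0 > 0$, the transport term drops out and $\partial_t G \ge 0$, $\Delta G \le 0$, so dividing the resulting inequality by $kg$ gives $G(x_0, t_0) \le C(n, c_0) k^{-2}(1 + T) \cosh^2 R$. On the smaller set $\{\cosh r \le \theta \cosh R\}$ one has $\eta \ge (1-\theta)^2 \cosh^2 R$, and using $k^{-1} \le 2\sup u^2$ together with $c_0 \le \VP \le 2u^2$ converts the bound on $G$ into the advertised estimate on $|A|^2$; the two cases $|A|^2(x_0,t_0)\le 1$ versus $>1$ must be treated separately to handle the $\sigma^2\VP$-term cleanly. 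The main technical obstacle is the bookkeeping required to absorb every $|A|^2|\nabla u|^2$ and $|\nabla A|^2$ cross-term simultaneously into the $-kg^2$ and $-\VP^{-1}\langle \nabla \VP, \nabla g\rangle_H$ reservoirs, which is precisely why $\VP$ must have its specific rational form and why the cut-off enters with a square.
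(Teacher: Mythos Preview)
Your proposal is correct and follows essentially the same approach as the paper: the same auxiliary function $G=g\eta t$ with $g=|A|^2\VP$, the same rational $\VP$ exploiting $\VP-u^2\VP'=-k\VP^2$, the same squared cut-off $\eta=(\cosh R-\cosh r)^2$, the same use of Young and Kato to reach the differential inequality for $g$, and the same maximum-point argument with the $|A|^2(x_0,t_0)\le 1$ versus $>1$ dichotomy. The only cosmetic difference is that the paper first writes out $(\partial_t-\Delta)(g\eta)$ explicitly before multiplying by $t$, whereas you pass directly to the schematic inequality for $G$.
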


\subsection{Estimates on all the higher order derivatives}
The estimates on all the higher order derivatives could be obtained analogously by looking at the evolution equations of the higher derivatives of the second fundamental form, see e.g. \cite{EH91} and \cite{U03}. For this, we have
\begin{lemma} For hypersurfaces $\Sigma_t$ moving by MMCF in $\mathbb{H}^{n+1}$ which can be written locally as radial graphs, we have\\
(i)
\begin{align*}
\left(\frac{\partial}{\partial t} -\Delta\right){\nabla}^m A
& 
=\sum_{i + j + k = m}{\nabla}^i A\ast {\nabla}^j A\ast {\nabla}^k A + \sigma\sum_{i + j = m}{\nabla}^i A\ast {\nabla}^j A 
\\
&
+ \sum_{i +j = m} {\nabla}^i A\ast {\nabla}^j R^H + \sigma\ast{\nabla}^m R^H.
\end{align*}
where $S \ast T$ is a tensor formed  by contraction of tensors $S$ and $T$ by the metric $g$ on $\Sigma_t$ or its inverse\,;\\
(ii)
\begin{align*}
\left(\frac{\partial}{\partial t} -\Delta\right)| {\nabla}^m A|^2
&\le  - 2| {\nabla}^{m+1} A|^2
 + c\,\Bigg(\sum_{i + j + k = m}| {\nabla}^i A|| {\nabla}^j A|| {\nabla}^k A|| {\nabla}^m A| \\
 &+\sigma\sum_{i + j  = m}| {\nabla}^i A|| {\nabla}^j A|| {\nabla}^m A|+| {\nabla}^m A|^2+\sigma | {\nabla}^m A|^2\Bigg)\,.
\end{align*}
\end{lemma}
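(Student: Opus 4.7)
The plan is to prove (i) by induction on $m$, and then deduce (ii) from (i) via a direct time-differentiation together with Cauchy--Schwarz. The base case $m=0$ of (i) follows from Lemma \ref{evolution-ofA}(i): applying Simons' identity to rewrite $\nabla_i\nabla_j H$ as $\Delta a_{ij}$ plus cubic-in-$A$ corrections and constant-curvature terms, and using that $(R^H)_{i0j0} = -\delta_{ij}$ in $\mathbb{H}^{n+1}$ (from $(R^H)_{\alpha\beta\gamma\delta}=\delta_{\alpha\delta}\delta_{\beta\gamma}-\delta_{\alpha\gamma}\delta_{\beta\delta}$), one obtains
\begin{equation*}
\left(\frac{\partial}{\partial t}-\Delta\right)a_{ij} \;=\; A\ast A\ast A \;+\; \sigma\, A\ast A \;+\; A\ast R^H \;+\; \sigma\, R^H,
\end{equation*}
where the linear terms $na_{ij}$ and $H\delta_{ij}$ are absorbed into $A\ast R^H$, and $\sigma\delta_{ij}$ into $\sigma R^H$.

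For the inductive step, assume (i) for some $m\ge 0$ and apply $\nabla$ to both sides. One must commute $\nabla$ with both $\partial_t$ and $\Delta$. The commutator $[\nabla,\partial_t]$ is controlled by $\partial_t g_{ij}=-2(H-\sigma)a_{ij}$, which yields $\partial_t \Gamma \sim \nabla(A\ast A)+\sigma\nabla A$, producing contributions of the form $\nabla^i A\ast\nabla^j A\ast\nabla^k A+\sigma\nabla^i A\ast\nabla^j A$. The commutator $[\nabla,\Delta]$ on a tensor introduces the intrinsic Riemann tensor of $\Sigma_t$, which by Gauss equals $A\ast A+R^H$. Since $\mathbb{H}^{n+1}$ has constant sectional curvature, $\nabla^H R^H\equiv 0$, so tangential covariant derivatives of $R^H$ along $\Sigma_t$ come only from Weingarten couplings with $A$ and fit the $\nabla^i A\ast\nabla^j R^H$ pattern. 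Applying $\nabla$ term-by-term via Leibniz to the right-hand side at level $m-1$ and absorbing the commutator contributions, every resulting piece falls into one of the four schematic classes claimed in (i).

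For (ii), using $\partial_t g^{ij}=2(H-\sigma)a^{ij}$ to differentiate the metric contractions defining $|\nabla^m A|^2$ gives
\begin{equation*}
\left(\frac{\partial}{\partial t}-\Delta\right)|\nabla^m A|^2 \;=\; 2\left\langle \nabla^m A,\left(\frac{\partial}{\partial t}-\Delta\right)\nabla^m A\right\rangle - 2|\nabla^{m+1}A|^2 + E,
\end{equation*}
where the error $E$ is schematically $A\ast\nabla^m A\ast\nabla^m A+\sigma\nabla^m A\ast\nabla^m A$ and is already absorbed into the right-hand side of (ii). Substituting (i) into the inner-product term and bounding each $\ast$-product by the elementary estimate $|S\ast T\ast U\ast V|\le c|S||T||U||V|$ (with $c$ absorbing combinatorial constants and bounded contractions of $g, g^{-1}$) yields exactly the stated inequality.

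The main obstacle is the combinatorial bookkeeping in the inductive step of (i): one must carefully track every term produced by $[\partial_t,\nabla]$, $[\nabla,\Delta]$, and Leibniz, and verify, using $\nabla^H R^H=0$ together with the Gauss equation, that each one lies in the schematic form $\nabla^i A\ast\nabla^j A\ast\nabla^k A$, $\sigma\nabla^i A\ast\nabla^j A$, $\nabla^i A\ast\nabla^j R^H$, or $\sigma\nabla^m R^H$. This is routine in Euclidean space (cf.\ \cite{EH91}) and was carried out for pure MCF in $\mathbb{H}^{n+1}$ in \cite{U03}; the only genuinely new feature is the drift $-\sigma\vnu_H$ in \eqref{MMCF0}, whose contributions parallel those of $H$ and produce precisely the extra $\sigma$-sums in the statement.
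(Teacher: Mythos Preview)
The paper does not actually supply a proof of this lemma: it is stated without argument, the surrounding text pointing to \cite{EH91} and \cite{U03} for the standard computation. Your proposal---induction on $m$ with the base case coming from Lemma~\ref{evolution-ofA}(i) combined with Simons' identity, the inductive step handled by computing the commutators $[\partial_t,\nabla]$ (via $\partial_t\Gamma$ from $\partial_t g_{ij}=-2(H-\sigma)a_{ij}$) and $[\nabla,\Delta]$ (via Gauss and $\nabla^H R^H=0$), and then (ii) obtained from (i) by differentiating $|\nabla^m A|^2$ and applying Cauchy--Schwarz---is precisely that standard argument, correctly adapted to the MMCF drift term $-\sigma\vnu_H$. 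Your identification of the extra $\sigma$-sums as exactly the contributions parallel to those of $H$ is accurate, and the absorption of the linear terms $na_{ij}$, $H\delta_{ij}$, $\sigma\delta_{ij}$ into the $A\ast R^H$ and $\sigma\ast R^H$ schematic classes is legitimate since $R^H$ in constant curvature is built from the metric.
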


\begin{theorem}\label{estonA3}
For all $t \in [0,T]$, any $R\geq \cosh^{-1}(n)$ and any $\theta \in (0,1)$ we have
\begin{align*}
\sup_{\{ x\in \Sigma_t | \cosh r \le \theta \cosh R\}} | {\nabla}^m A|^2
&
\le c\left(n,c_0, \sup_{s\in [0,t]}\sup_{\{ x\in \Sigma_s | r \le R\}} u\right)\left(1+\frac{1}{t}\right)(1-\theta)^{-2}\left(1 + \frac{1}{t}\right)^{m+1}\,.\end{align*}
\end{theorem}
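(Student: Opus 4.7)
The approach will be an induction on $m$. The base case $m = 0$ is Theorem \ref{estonA2}, where the explicit factor $\sup u^4$ can be absorbed into a constant $c$ that is already allowed to depend on $\sup u$. For the inductive step, assume the estimate is valid for all orders $j \le m-1$ on any cylinder $\{\cosh r \le \theta' \cosh R\}$ with $\theta' \in (0,1)$. Given $\theta \in (0,1)$, set $\theta' = (1+\theta)/2$, so that the inductive hypothesis furnishes uniform bounds $B_j$ on $|\nabla^j A|^2$ for $j \le m-1$ on the slightly larger cylinder $\{\cosh r \le \theta' \cosh R\} \times [0,t]$.

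The main step will be to apply the maximum principle to a combined test function of the form
$$G = \eta^2 \bigl(t^{m+1}|\nabla^m A|^2 + K\, t^m |\nabla^{m-1} A|^2\bigr),$$
where $\eta = (\theta'\cosh R - \cosh r)_+$ is a spatial cutoff (positive on $\{\cosh r \le \theta \cosh R\}$) and $K$ is a large constant to be chosen. Applying part (ii) of the preceding lemma, the evolution of $|\nabla^m A|^2$ contributes a good term $-2\eta^2 t^{m+1}|\nabla^{m+1}A|^2$, and the evolution of $|\nabla^{m-1}A|^2$ contributes a good term $-2K\eta^2 t^m |\nabla^m A|^2$. The potentially dangerous terms are of three kinds: the cubic pieces $|\nabla^i A||\nabla^j A||\nabla^k A||\nabla^m A|$ with $i+j+k = m$, the $\sigma$-quadratic and curvature pieces, and the cutoff cross terms involving $\nabla\eta$ and $\nabla|\nabla^m A|^2$. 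For the cubic pieces, when all of $i,j,k \le m-1$ the first three factors are bounded by induction (via the $B_j$); when one index equals $m$, the other two are copies of $A$, so the term is $C|A|^2|\nabla^m A|^2$ and will be absorbed by $2K\eta^2 t^m |\nabla^m A|^2$ once $K$ is chosen larger than a suitable multiple of $\sup|A|^2$, which is controlled by Theorem \ref{estonA2}. The $\sigma$-quadratic terms are harmless since $\sigma \in (-n,n)$ and $R^H$ is constant, and the cutoff cross terms will be absorbed by the $-|\nabla^{m+1}A|^2$ term via Young's inequality.

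At an interior maximum point $(x_0,t_0)$ of $G$ on $\{\cosh r \le \theta' \cosh R\}\times[0,T]$ — necessarily with $t_0 > 0$ since $G$ vanishes on the parabolic boundary — one has $(\partial_t - \Delta) G \ge 0$, which after the absorptions above forces $G(x_0, t_0) \le C(n, c_0, R, T, \sup u)$. Since $\eta \ge c(1-\theta)\cosh R$ on $\{\cosh r \le \theta \cosh R\}$, dividing by $\eta^2 t^{m+1}$ at time $t = T$ will yield the desired estimate. The hard part will be the bookkeeping: tuning $K$ together with the exponents on $\eta$ and $t$ so that every bad term is dominated by one of the two good terms, and verifying that the resulting bound carries exactly the power $(1+1/t)^{m+1}$ of the statement — a power built into the test function through the factor $t^{m+1}$, which also explains why each inductive step costs one additional factor of $(1+1/t)$.
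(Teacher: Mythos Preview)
Your proposal is correct and follows essentially the same approach as the paper's own proof, which is a single line deferring to the method of Theorem~\ref{estonA2} and citing Ecker--Huisken \cite{EH91}. The inductive maximum-principle argument you describe --- combining $t^{m+1}|\nabla^m A|^2$ with a large multiple of $t^m|\nabla^{m-1}A|^2$ under a spatial cutoff, and using the good $-|\nabla^{m+1}A|^2$ and $-K|\nabla^m A|^2$ terms to absorb everything else --- is precisely the Ecker--Huisken scheme the paper is invoking.
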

\begin{proof}
Similar to the proof of Theorem \ref{estonA2}, c.f. \cite{EH91}\,.
\end{proof}

\section{Proof of Theorem \ref{mainthem}}\label{proofofmain}

Our goal in this section is to prove the main Theorem \ref{mainthem}.

\begin{proof}
We will use the method of continuity. First assume $\Sigma_0$ (or equivalently $v_0$) is smooth. For any $\VE > 0$, define the solid cylinder 
$$\mathbf{C}_{\VE} = \left\{x \in \mathbb{H}^{n+1}:  \frac{|x|_E}{x_{n+1}} \le \frac{1}{\VE}\right\}\,,$$ 
and let $\Sigma_0^\VE = \Sigma_0 \cap \mathbf{C}_{\VE}$ and $\Omega_{\VE} := \mathbf{F}_0^{-1}(\Sigma_0 \cap \mathbf{C}_{\VE})$. Then $\Omega_\VE$ is compact and $\Gamma_\VE := \mathbf{F}_0(\partial\Omega_\VE)$ is a smooth radial graph over $\partial\Omega_\VE$. 

From the existence result in \cite{LX12} for the approximate MMCF we know that the initial-boundary value problem
\begin{equation}\label{MMCF5}
\left\{
\begin{aligned}
\frac{\partial}{\partial t} \mathbf{F}(\mathbf{z},t) &= (H-\sigma)\,\vnu_H\,,\quad (\mathbf{z},t) \in \Omega_{\VE} \times (0,\infty)\,,\\
\mathbf{F}(\mathbf{z},0) &= \mathbf{F}_0(\mathbf{z})\,,\quad \mathbf{z} \in \Omega_{\VE} \,, \\
\mathbf{F}(\mathbf{z},t) & = \Gamma_\VE(\mathbf{z})\, , \quad (\mathbf{z},t)\in \partial \Omega_{\VE} \times [0,\infty)
\end{aligned}
\right.
\end{equation}
has a unique radial graph solution $\mathbf{F}_t^{\VE}(\mathbf{z}) = \mathbf{F}^{\VE}(\mathbf{z},t) \in C^\infty( \Omega_{\VE}\times (0,\infty)) \cap C^{0+1,0+\frac{1}{2}}(\overline{\Omega_{\VE}}\times (0,\infty)) \cap C^0(\overline{\Omega_{\VE}}\times [0,\infty))$, and we denote $\Sigma_t^\VE = \mathbf{F}^\VE(\Omega_\VE, t)$.

Now, for every $\VE \in (0,1)$, let $v^\VE(\vz,t)$ be the solution to \eqref{MMCF5} (c.f. \eqref{MMCF1}), namely,
\begin{equation}\label{MMCF6}
\left\{
\begin{aligned}\frac{\partial v^\VE(\mathbf{z},t)}{\partial t} &=\, y^2\frac{\alpha^{ij}v^\VE_{ij}}{n}-y\mathbf{e}\cdot\nabla
v^\VE-\sigma yw^\VE \,, \quad (\mathbf{z},t)\in \Omega_{\VE} \times (0,\infty)\,,\\
v^\VE(\mathbf{z},0) &= v_0(\mathbf{z})\,,\quad \mathbf{z} \in \Omega_{\VE}\,,\\
v^\VE(\mathbf{z},t) &= \phi^\VE(\mathbf{z})\,,\quad (\mathbf{z},t)\in \partial \Omega_{\VE} \times [0,\infty)\,.
\end{aligned}
\right.
\end{equation}
For a fixed $\delta_0>0$ sufficiently small, let
$$E_{t,\VE,\delta_0} := \Sigma_t^\VE \cap \left\{x \in \mathbb{H}^{n+1} \mid r(x) \le \cosh^{-1}\left(\frac{1}{\delta_0} \right) \right\} = \Sigma_t^\VE \cap \mathbf{C}_{\delta_0}\,,$$
where $r(x)$ is the hyperbolic distance from $x \in \mathbb{H}^{n+1}$ to the $x_{n+1}$-axis and $\cosh r(x) = \frac{|x|_E}{x_{n+1}}$. Then $E_{t,\VE,\delta_0}$ is a compact radial graph and we have $E_{0,\VE,\delta_0} = E_{0,\delta_0,\delta_0}$ for all $\VE\le \delta_0$. By compactness, there exist caps $S_1,S_2$ with constant mean curvature $\sigma$ such that the Euclidean norms satisfy $c^{-1}(\Sigma_0^{\delta_0})\leq |x_1|_E \leq |\mathbf{F}^\VE_0(\vz)| \leq |x_2|_E \leq c(\Sigma_0^{\delta_0})$ for all $x_i \in S_i$, $i = 1,2$, any $\vz \in (\mathbf{F}^\VE_0)^{-1}(E_{0,\VE,\delta_0})$, and any $\VE \le \delta_0$. This implies, by the comparison principle for MMCF, that for all $\VE \le \delta_0$ we have
\begin{equation*}
\sup_{t \in (0,\infty)}\sup_{\vz \in (\mathbf{F}^\VE_t)^{-1}(E_{t,\VE,\delta_0})} |v^\VE(\vz,t)| \le c_0\left(n,\delta_0,\sup_{ \vz \in  \mathbf{F}_0^{-1}(E_{0,\delta_0,\delta_0}) }|v_0(\vz)|\right)\,.
\end{equation*}
For $\theta \in (0,1)$, let 
$$G_{t,\VE,\delta_0,\theta} := \left\{x \in E_{t,\VE,\delta_0} \mid e^{(n+\sigma)t}\left(\cosh r(x) + \frac{\sigma}{n+\sigma}\right)\le  \frac{\theta}{\delta_0}\right\}.$$ 
Note that by Theorem \ref{intgrad}, for all $\VE \le \delta_0$ and any $T_0 > 0$ we have
$$\sup_{t \in [0,T_0]}\sup_{\vz \in (\mathbf{F}^\VE_t)^{-1}( G_{t,\VE,\delta_0,\frac{1}{2}} )}|\nabla v^\VE(\vz,t)| \le e^{(n+2)T_0}c_1\left(n, \delta_0,c_0, \sup_{\vz \in \mathbf{F}_0^{-1}(E_{0,\delta_0,\delta_0})}|\nabla v_0(\vz)| \right)\,.$$

For $\VE_0>0$ and $\theta \in (0,1)$, let 
$$K_{t,\VE,\VE_0,\theta} := \left\{x \in E_{t,\VE,\delta_0} \mid\cosh r(x) \le   \frac{\theta}{\VE_0}\right\}.$$
Choose $\delta_0 > 0$ sufficiently small such that $\frac{1}{\delta_0^{1/2}} - \frac{\sigma}{n+\sigma} \geq 2,$ and let  $T_0 = -\frac{1}{2(n+\sigma)}\log \delta_0$ and $\VE_0= \left(\frac{1}{\delta_0^{1/2}} - \frac{\sigma}{n+\sigma}\right)^{-1}$.
Then, for our choices of $\delta_0, T_0,$  $\VE_0$ we know that for any $\VE \le \delta_0$,
$$G_{T_0, \VE,\delta_0,\frac{1}{2}} = K_{T_0, \VE,\VE_0,\frac{1}{2}}\,.$$

Hence, for all $\VE \le \delta_0$, we have
\begin{equation*}
\sup_{t \in [0,T_0]}\sup_{\vz \in (\mathbf{F}^\VE_t)^{-1}( K_{t,\VE,\VE_0,\frac{1}{2}})}|\nabla v^\VE(\vz,t)| \le e^{(n+2)T_0}c_1\left(n, \delta_0,c_0, \sup_{\vz \in \mathbf{F}_0^{-1}(E_{0,\delta_0,\delta_0})}|\nabla v_0(\vz)| \right)\,.
\end{equation*}
Therefore, by Theorem \ref{estonA3}\,, for any integer $m \ge 2$ and any $\VE \le\delta_0$, we have
\begin{equation*}
\sup_{t \in [0,T_0]}\sup_{\vz \in (\mathbf{F}^\VE_t)^{-1}(K_{t,\VE,\VE_0,\frac{1}{2}})}|\nabla^m v^\VE(\vz,t)| \le c_m(n,\delta_0,c_1)\,.
\end{equation*}

Hence, for such fixed $\delta_0>0$, by the Arzel\`{a}-Ascoli Theorem, 
there exists some sequence
$\{\VE_{i,0}\}^\infty_{i = 1}$ such that $\VE_{i,0} \rightarrow 0$ as $i \rightarrow\infty$ and 
such that $v^{\VE_{i,0}}$
converges uniformly in $C^\infty$ to some $v^{\VE_0,T_0} \in C^\infty(\Omega_{2\VE_0} \times [0,T_0])$ as $i \rightarrow \infty$ which solves \eqref{MMCF6}. Now fix a descending sequence $\{\delta_k\}_{k = 0}^\infty$ such that $\delta_k \rightarrow 0$ as $k \rightarrow \infty$. Then define $T_k =  -\frac{1}{2(n+\sigma)}\log \delta_k,$ and $\frac{1}{\VE_k} = \frac{1}{\delta_k^{1/2}} - \frac{\sigma}{n+\sigma}.$ Then $T_k \rightarrow \infty$ and $\VE_k \rightarrow 0$ as $k \rightarrow \infty$.

For non-negative integers $k$, suppose we have a function $v^{\VE_k,T_k} \in C^\infty(\Omega_{2\VE_k} \times [0,T_k])$ solving \eqref{MMCF6} such that $v^{\VE_k,T_k}$ is the uniform limit of some sequence $\{v^{\VE_{i,k}}\}_{i = 1}^\infty$ and  $v^{\VE_k,T_k}\vert_{\Omega_{2\VE_l} \times [0,T_l]} = v^{\VE_l,T_l}$ for all non-negative integers $l \le k$. We can see this by induction. The case of $k = 0$ was done above. Our interior estimates imply we have uniform bounds of $v^{\VE}$ and its derivatives on $\Omega_{2\VE_{k+1}}\times [0,T_{k+1}]$ for $\VE \le \delta_{k+1}$. So, again by the Arzel\`{a}-Ascoli Theorem, there exists a subsequence $\{v^{\VE_{i,k+1}}\}_{i = 1}^\infty$ of $\{v^{\VE_{i,k}}\}_{i = 1}^\infty$ such that $v^{\VE_{i,k+1}}$ converges uniformly to some $v^{\VE_{k+1},T_{k+1}} \in C^\infty(\Omega_{2\VE_{k+1}} \times [0,T_{k+1}])$ as $i \rightarrow \infty$. Since $\Omega_{2\VE_{k}} \times [0,T_{k}] \subset \Omega_{2\VE_{k+1}} \times [0,T_{k+1}]$ and $\{v^{\VE_{i,k+1}}\}_{i = 1}^\infty$ is a subsequence of $\{v^{\VE_{i,k}}\}_{i = 1}^\infty$, we must have $v^{\VE_{k+1},T_{k+1}}\vert_{\Omega_{2\VE_k} \times [0,T_k]} = v^{\VE_k,T_k}$.

If $(\vz,t) \in \mathbb{S}^n_+ \times [0,\infty)$, then there exists some non-negative integer $k$ such that $(\vz, t) \in \Omega_{2\VE_k}\times [0,T_k]$. Define $v(\vz,t) = v^{\VE_k,T_k}(\vz,t)$. Then our construction of the sequence $v^{\VE_k,T_k}$ shows this definition is well-defined. Moreover,  if we define $\mathbf{F}(\vz,t) = e^{v(\vz,t)}\vz$ on $\mathbb{S}^n_+ \times [0,\infty)$,  then $\mathbf{F} \in C^\infty( \mathbb{S}^n_+ \times [0,\infty))$ solves \eqref{MMCF1}.

Now if $\Sigma_0$ is merely locally Lipschitz continuous, then for any fixed compact subset $\Omega \subset \mathbb{S}^n_+$, we can approximate $v_0$ by smooth functions $v^j_0$ with the same Lipschitz bound as the Lipschitz bound of $v_0$ on $\Omega$. By the above arguments, for every $s$, there is a smooth one parameter family of functions $v_t^j$ solving \eqref{MMCF6} with initial data $v_0^s$. Now our interior estimates imply $v_t^j$ and all its derivatives are uniformly bounded in any compact set $K \subset \Omega$, which again implies the existence of a uniform limit $v \in C^\infty(K \times (0,T]) \cap C^{0+1,0+1/2}(K \times [0,T])$. Since $\Omega$ and $T$ were arbitrary, this establishes the existence of a function  $v \in C^\infty(\mathbb{S}^n_+ \times (0,\infty)) \cap C^{0+1,0+1/2}(\mathbb{S}^n_+ \times [0,\infty))$ which solves \eqref{MMCF1}.
\end{proof}

\bibliographystyle{alpha}
\bibliography{MMCF}

\end{document}